\font\Bbb=msbm10 at 10 truept
\def\n{\hbox{\Bbb N}}
\def\z{\hbox{\Bbb Z}}
\DeclareMathOperator{\Reg}{Reg}
\DeclareMathOperator{\Base}{Base}
\DeclareMathOperator{\Ann}{Ann}
\DeclareMathOperator{\Ima}{Im}
\newcommand{\qed}{
  \ifmmode
   \eqno{\qedsymbol}
  \else
    \leavevmode\unskip\penalty9999 \hbox{}\nobreak\hfill\hbox{\qedsymbol}
  \fi
}
\newcommand{\qedsymbol}{\leavevmode\vrule height 1.2ex width 1.1ex depth -.1ex}
\newenvironment{proof}{\begin{trivlist}\item[\hskip
\labelsep{\bf Proof.\quad}]}
{\hfill\qed\rm\end{trivlist}}
\newcommand{\xux}[1]{#1U_{\overline{#1}}}
\newcommand{\ed}{\varepsilon\delta}
\newcommand{\pn}{{\cal P}(N)}
\newtheorem{theorem}{Theorem}[section]
\newtheorem{corollary}[theorem]{Corollary}
\newtheorem{proposition}[theorem]{Proposition}
\newtheorem{lemma}[theorem]{Lemma}
\title{The multiplicative semigroup of a Dedekind domain}
\author{James Renshaw\\
\small School of Mathematical Sciences\\
\small University of Southampton\\
\small Southampton, SO17 1BJ, England\\
\small ORCID: 0000-0002-5571-8007\\
\small  j.h.renshaw@soton.ac.uk\\\\
William Warhurst\\
\small School of Mathematical Sciences\\
\small University of Southampton\\
\small Southampton, SO17 1BJ, England\\
\small  w.warhurst@soton.ac.uk\\
}
\begin{document}
\date{August 2023}
\maketitle
\begin{abstract}
\noindent In 1995 Grillet defined the concept of a stratified semigroup and a stratified semigroup with zero. The present authors extended that idea to include semigroups with a more general base and proved, amongst other things, that finite semigroups in which the ${\cal H}-$classes contain idempotents, are semilattices of stratified extensions of  completely simple semigroups, and every strict stratified extension of a Clifford semigroup is a semilattice of stratified extensions of groups. We continue this work here by considering the multiplicative semigroup of Dedekind domains and show in particular that quotients of such rings have a multiplicative structure that is a (finite) boolean algebra of stratified extensions of groups.
\\
\\
{\bf Keywords} Semigroup, stratified extension, semilattice, Dedekind domain.\\
{\bf Mathematics Subject Classification} 2020: 20M10.
\end{abstract}
\section{Introduction and preliminaries}

In \cite{warhurst-23}, the authors introduce stratified extensions of semigroups as a generalisation of the work of Grillet in \cite{grillet-95}. They define the \textit{base} of a semigroup $S$ to be the subset $\Base(S) = \bigcap_{m>0} S^m$ and note that $S$ is a \textit{stratified semigroup} as defined by Grillet if $\Base(S) = \{0\}$ or $\Base(S)$ is empty. A semigroup $S$ is then called a \textit{stratified extension} of $\Base(S)$ if $\Base(S) \neq \emptyset$. The name signifying the fact that in this case $S$ is an ideal extension of $\Base(S)$ by a stratified semigroup with zero.

\medskip

After a few preliminaries, in Section 2 we consider the multiplicative structure of commutative rings in a more general way and describe the ${\cal J}-$classes in terms of certain annihilators. We then show that the multiplicative semigroup can be viewed as a semilattice of semigroups. In section 3 we specialise to Dedekind domains and show that the subsemigroups of the semilattice are stratified extensions of groups. In section 4, we consider quotients of Dedekind domains and demonstrate by using prime factorisations of ideals, that the multiplicative structure is a finite Boolean algebra of stratified extensions of groups and give a `recipe' for constructing both the semilattice and the stratified subsemigroups. Section 5 then presents some interesting examples.

\medskip

If $S$ is a stratified extension of $\Base(S)$, the \textit{layers} of $S$ are defined to be the sets $S_m = S^m \setminus S^{m+1}$, $m\ge 1$. This definition makes sense for any semigroup, of course, but we are only interested in the case when $\Base(S) = \cap_{m\ge 1}S^m\ne\emptyset$. Every element of $S$ lies either in the base of $S$ or in exactly one layer of $S$, and if $s \in S_m$ then $m$ is the \textit{depth} of $s$. If $S$ has finitely many layers then the numbers of layers is called the \textit{height} of $S$. The layer $S_1$ generates every element of $S \setminus \Base(S)$ and is contained in any generating set of $S$.

\medskip

Since $\Base(S) \subseteq S^m$ for any $m \in \mathbb{N}$, we have an alternative characterisation for the elements of $\Base(S)$. A element $s \in S$ lies in $\Base(S)$ if and only if for any $m \in \mathbb{N}$, $s$ can be factored into a product of $m$ elements i.e. $s = a_1 a_2 \dots a_m$ for some $a_i \in S$. This characterisation allows us to deduce some immediate properties of $\Base(S)$ as a subsemigroup of $S$.

\begin{lemma}[{\cite[Corollary 2.2]{warhurst-23}}]\label{paper1-lemma}
Let $S$ be a semigroup. $\Reg(S)\subseteq \Base(S)$ and if $S$ is regular then $\Base(S) = S$.
\end{lemma}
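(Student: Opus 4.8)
The statement has two parts, and the second follows from the first: if $S$ is regular then $S = \Reg(S) \subseteq \Base(S) \subseteq S$, forcing equality. So the plan is to concentrate on showing $\Reg(S) \subseteq \Base(S)$. I would take $s \in \Reg(S)$ and use the characterisation of the base recalled just before the lemma: it suffices to show that for every $m \in \mathbb{N}$, the element $s$ can be written as a product of $m$ elements of $S$.

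**The key idea.** Since $s$ is regular, there exists $x \in S$ with $s = sxs$. The trick is to iterate this identity. Substituting $s = sxs$ into itself repeatedly, I would show by a short induction on $m$ that $s = s(xs)^{m-1} = s \underbrace{xs \, xs \cdots xs}_{m-1 \text{ copies of } xs}$, which exhibits $s$ as a product of $2m-1$ (hence certainly at least $m$) elements of $S$; alternatively, grouping as $s = (sx)^{m-1} s$ works equally well. Either way, for each $m$ we obtain a factorisation of $s$ into arbitrarily many factors, so $s \in \bigcap_{m>0} S^m = \Base(S)$.

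**The induction step in detail.** The base case $m = 1$ is trivial ($s = s$), and $m = 2$ is the defining relation $s = sxs$. For the inductive step, assuming $s = s(xs)^{m-1}$, I replace the leading $s$ using $s = sxs$ to get $s = (sxs)(xs)^{m-1} = s(xs)^{m}$; associativity in the semigroup is all that is needed to regroup. This gives $s = s(xs)^{m}$ for all $m \geq 0$, and since each such expression is a product of $m+1$ elements of $S$, the element $s$ lies in $S^{m+1}$ for every $m$, hence in $\Base(S)$.

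**Anticipated obstacle.** There is essentially no obstacle here — the proof is a one-line induction once the right identity $s = s(xs)^{m-1}$ is spotted. The only thing to be slightly careful about is the bookkeeping: ensuring the number of factors genuinely grows without bound with $m$ (it does, since we get $s \in S^{m+1}$), and handling the degenerate case where $S$ might be trivial or $s$ might be idempotent (these are absorbed by the general argument and need no separate treatment). The deduction of the second clause from the first is immediate as noted above.
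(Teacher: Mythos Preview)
The paper does not supply its own proof of this lemma; it is quoted from \cite{warhurst-23} (as Corollary~2.2 there) and stated without argument. Your proof is correct and is the natural one: iterating $s = sxs$ yields $s = s(xs)^m \in S^{m+1}$ for every $m\ge 0$, so $s\in\bigcap_{m>0}S^m=\Base(S)$, and the second clause follows immediately since $S=\Reg(S)$ when $S$ is regular.
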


\begin{lemma}\label{paper1-lemma2}
Let $S,T$ be semigroups and $f:S\to T$ a morphism. Then for all $i\in\n$, $S^i\subseteq f^{-1}(T^i)$ and so in particular, $\Base(S)\subseteq f^{-1}(\Base(T))$.
\end{lemma}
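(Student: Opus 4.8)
The plan is to verify the first containment directly from the definition of $S^i$ together with the fact that $f$ is a homomorphism, and then to deduce the statement about bases by taking the intersection over all powers, using that preimages commute with intersections.

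First I would fix $i\in\n$ and take an arbitrary element $s\in S^i$. By definition of $S^i$ we may write $s=a_1a_2\cdots a_i$ with each $a_j\in S$. Since $f$ is a morphism, $f(s)=f(a_1)f(a_2)\cdots f(a_i)$, which is a product of $i$ elements of $T$ and hence lies in $T^i$. Therefore $s\in f^{-1}(T^i)$, and as $s$ was arbitrary, $S^i\subseteq f^{-1}(T^i)$. Equivalently, one can argue by induction on $i$: the base case $i=1$ is just $f(S)\subseteq T$, and the inductive step follows from $S^{i+1}=S^iS$ together with $f(S^iS)=f(S^i)f(S)\subseteq T^iT=T^{i+1}$.

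For the second assertion, recall that $\Base(S)=\bigcap_{m>0}S^m$ and $\Base(T)=\bigcap_{m>0}T^m$. By the first part, $S^m\subseteq f^{-1}(T^m)$ for every $m>0$, so
\[
\Base(S)=\bigcap_{m>0}S^m\subseteq\bigcap_{m>0}f^{-1}(T^m)=f^{-1}\Bigl(\bigcap_{m>0}T^m\Bigr)=f^{-1}(\Base(T)),
\]
where the middle equality is the standard fact that taking preimages under a function commutes with arbitrary intersections.

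Since every step is immediate, I do not expect any real obstacle here; the only points requiring a word of care are the routine observation that $f^{-1}$ distributes over the intersection defining the base, and that the index set $\{m>0\}$ is the same on both sides of that identity.
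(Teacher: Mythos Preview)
Your proof is correct. The paper actually states this lemma without proof, treating it as an elementary observation, so there is nothing to compare against; your direct verification (factor $s$ as a product of $i$ elements, apply $f$, then intersect over all $m$ and use that preimages commute with intersections) is exactly the intended routine argument.
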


\medskip

For the basic concepts in semigroup theory we refer the reader to~\cite{howie-95}. In particular, we say that $S$ is a {\em semilattice of semigroups} $S_\alpha, \alpha\in Y$, and write $S = {\cal S}[Y,S_\alpha]$, if $Y$ is a semilattice, $S = \dot\cup_{\alpha\in Y}S_\alpha$ and for all $\alpha,\beta \in Y$, $S_\alpha S_\beta\subseteq S_{\alpha\beta}$. Notice that if $Y$ is a semilattice and if $\phi : S\to Y$ is an onto morphism, then $S$ is a semilattice of semigroups $S = {\cal S}[Y,\phi^{-1}(\alpha)]$.

\medskip

For more details of basic definitions and results in ring theory, we refer the reader to~\cite{cohn-82} and~\cite{cohn-89}. An ideal $I$ of a ring $R$ is {\em prime} if $I$ is a proper ideal and for all $a,b\in R$, if $ab\in I$ then either $a\in I$ or $b\in I$. A {\em domain} is a ring with no non-zero zero-divisors and a {\em Dedekind domain} is a commutative domain in which every non-zero proper ideal can be factored into a product of prime ideals. If $I,J\unlhd R$ are ideals of $R$ then we say that {\em $I$ divides $J$} and write $I|J$ if and only if there exists $H\unlhd R$ with $I = JH$. Then $R$ is a Dedekind domain if and only if
$$
\text{for all }I,J\unlhd R, J\subseteq I\text{ if and only if }I|J.
$$
A {\em principal ideal domain} is a commutative domain in which every ideal is principal. A Dedekind domain is a principal ideal domain if and only if it is a unique factorisation domain. If $R$ is a Dedekind domain and $\{0\}\ne I\unlhd R$ is a non-zero ideal of $R$ then $R/I$ is a principal ideal ring. A Dedekind domain is Noetherian and as such every non-zero, non-unit element can be factorised into a product of irreducible elements. The following elementary properties of ideals will be used implicitly in some of what follows.

\begin{lemma}
Let $I,J$ be ideals of $R$. Then
\begin{enumerate}
\item $IJ \subseteq I \cap J$.
\item $I \cup J \subseteq I + J$
\item $I \subseteq J \iff I + J = J$.
\item $IJ+J = J$.
\end{enumerate}
\end{lemma}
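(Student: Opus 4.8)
The plan is to prove the four items in the order given, since each one feeds naturally into the next and no external machinery is needed beyond the definitions of sum and product of ideals. Recall that $IJ$ denotes the set of all finite sums $\sum_k a_k b_k$ with $a_k\in I$, $b_k\in J$, and $I+J$ the set of sums $a+b$ with $a\in I$, $b\in J$; the only mild care required throughout is in checking that the relevant sets are closed under addition, which holds because ideals are in particular additive subgroups.

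For (1) I would invoke only the absorption property: since $I$ is an ideal, each product $a_kb_k$ with $a_k\in I$ lies in $I$, and since $I$ is closed under addition the whole sum $\sum_k a_kb_k$ lies in $I$; symmetrically it lies in $J$ because $J$ is an ideal. Hence $IJ\subseteq I\cap J$. For (2) I would note that $0\in J$ (as $J$ is an additive subgroup), so every $a\in I$ equals $a+0\in I+J$, giving $I\subseteq I+J$; by symmetry $J\subseteq I+J$, and therefore $I\cup J\subseteq I+J$.

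For (3), the reverse implication is immediate from (2): if $I+J=J$ then $I\subseteq I+J=J$. For the forward implication, assume $I\subseteq J$; then every element $a+b$ with $a\in I\subseteq J$ and $b\in J$ lies in $J$ because $J$ is closed under addition, so $I+J\subseteq J$, while $J\subseteq I+J$ by (2), whence $I+J=J$. Finally, (4) is just (3) applied with $IJ$ in place of $I$: by (1) we have $IJ\subseteq I\cap J\subseteq J$, so (3) gives $IJ+J=J$.

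There is essentially no obstacle here; the statement is a sequence of one-line verifications. The only point worth flagging — and the reason I keep the closure-under-addition remarks explicit — is the set-theoretic description of $IJ$ and $I+J$ as sets of \emph{finite sums} rather than single products or single sums; once that is kept in mind, everything goes through verbatim and does not require $R$ to possess an identity.
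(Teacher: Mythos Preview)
Your proof is correct. The paper itself does not supply a proof of this lemma at all --- it is stated as an ``elementary property of ideals'' and left unproved --- so your argument is more than sufficient, and indeed is the standard verification one would expect.
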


\section{Rings as semilattices of semigroups}\label{semilattice-section}
Let $R$ be a commutative ring with unity. We will show that the multiplicative semigroup of $R$ is a semilattice of semigroups, and investigate this structure further in Section~\ref{section-3}.
Note that as $R$ is commutative then on the multiplicative semigroup of $R$, Green's relations, ${\cal H} = {\cal R}={\cal L} = {\cal D} = {\cal J}$ coincide. Recall that for any ring $R$, the quotient $R/(0)$ is naturally isomorphic to $R$ itself via the map $x \mapsto x + (0)$. Some of the results below take place within $R/(0)$ and we could make use of this isomorphism to recast them in $R$ instead. However we have chosen not to do this explicitly.

Let $D$ be the set of all ideals of $R$. It is easy to see that, under the usual addition and multiplication of ideals, $D$ forms a semiring with additive identity $(0)$ and multiplicative identity $(1) = R$. Let $\delta: R \rightarrow D$ be given by $\delta(x) = (x)$. This is clearly a semiring homomorphism.

\begin{proposition}
The kernel of $\delta$ is Green's ${\cal J}-$relation and hence ${\cal J}$ is a congruence on $R$.
\end{proposition}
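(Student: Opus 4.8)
The plan is to prove the statement in two moves: first identify the relation $\ker\delta = \{(a,b) : (a)=(b)\}$ with Green's ${\cal J}$, and then note that $\mathcal{J}$ being a congruence is automatic once this identification is made, since the kernel of any semigroup homomorphism is a congruence and $\delta$ is, in particular, a homomorphism of the multiplicative semigroup of $R$.

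The first step is where the (small amount of) work lies. I would begin by observing that the multiplicative semigroup $S$ of $R$ is a monoid with identity $1$, so that for $a\in R$ the principal two-sided ideal of $S$ generated by $a$ is $S^1 a S^1 = RaR$. A priori $RaR$ is the set of finite sums $\sum_i r_i a s_i$, but using commutativity each term is $(r_i s_i)a$, so $RaR = Ra$, and since $1\in R$ this set is exactly the ring-theoretic principal ideal $(a) = \delta(a)$, and it contains $a$ itself. (Equivalently, using the fact recorded just before the proposition that here ${\cal J} = {\cal L}$, one may instead work with $S^1 a = Ra = (a)$, which is slightly more direct.)

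Given this, the proof concludes quickly: for $a,b\in R$ we have $a\,{\cal J}\,b$ if and only if $S^1 a S^1 = S^1 b S^1$, i.e. if and only if $(a) = (b)$, i.e. if and only if $\delta(a) = \delta(b)$. Hence ${\cal J} = \ker\delta$. Since $\delta$ is a homomorphism of multiplicative semigroups, $\ker\delta$ is a congruence on $(R,\cdot)$, and therefore ${\cal J}$ is a congruence, as claimed.

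I do not expect a genuine obstacle here; the only point demanding care is the identification of the semigroup-theoretic principal ideal $S^1 a S^1$ with the ring-theoretic principal ideal $(a)$, which is exactly where commutativity and the presence of a unity are used. Everything else is formal, and in particular no part of the \emph{additive} structure of $R$ (nor the fact that $\delta$ respects addition) is needed for this proposition.
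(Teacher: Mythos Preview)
Your proof is correct and follows essentially the same route as the paper's, which simply observes that $\delta(x)=\delta(y)$ means the principal ideals $RxR$ and $RyR$ coincide, i.e.\ $x\,{\cal J}\,y$, and conversely. One small imprecision worth tidying: the \emph{semigroup} principal ideal $S^1 a S^1$ is the set of single products $\{ras:r,s\in R\}$, not of finite sums; but since $R$ is commutative with unity this set already equals $Ra=(a)$, so your conclusion is unaffected.
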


\begin{proof}
Let $x, y \in R$ such that $\delta(x) = \delta(y)$. Then the principal ideals $RxR$ and $RyR$ are equal, so $x {\cal J} y$. Conversely if $x {\cal J} y$ then $\delta(x) = RxR = RyR = \delta(y)$.
\end{proof}

Let $x\in R$ and let
$$
\overline{x} = \Ann(x) = \{y \in R \;|\; xy = 0\}
$$
be the annihilator of $x$, which is clearly an ideal of $R$. Let $R_{\overline{x}} = R / \overline{x}$ and let $U_{\overline{x}}$ be the group of units of this quotient. For $y \in R$, we denote by $[y]_{\overline{x}}$ the coset $y + \overline{x}$ and consider the set $\xux{x}$. We will see that this set is essentially the ${\cal J}-$class of $R$ containing $x$. Note that $x\overline{x} = \{xy \;|\; y \in R, xy = 0\} = \{0\} = (0)$.

\begin{lemma}\label{cancellation-lemma}
Let $x\in R$ and let $V_x = \{u\in R\;|\;\exists v\in R, xuv=x\}$. Then
\begin{enumerate}
\item $\xux{x}\subseteq xR_{\overline x}\subseteq R/(0)$,
\item For $[u]_{\overline x},[v]_{\overline x} \in R_{\overline x}$,  $x[u]_{\overline{x}} = x[v]_{\overline{x}}$ if and only if $[u]_{\overline{x}} = [v]_{\overline{x}}$,
\item $V_x$ is a submonoid of $R$ and $u\in V_x$ if and only if $[u]_{\overline x} \in U_{\overline x}$.
\end{enumerate}
\end{lemma}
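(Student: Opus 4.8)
The plan is to handle the three parts in sequence, leaning throughout on the observation recorded just before the statement that $x\overline{x} = (0)$. I would begin the proof by making the consequence of this explicit: if $u + \overline{x} = u' + \overline{x}$ then $x(u - u') = 0$, so $xu$ and $xu'$ coincide in $R$ (hence in $R/(0)$); thus the ``product'' $x[u]_{\overline{x}}$ is a well-defined element of $R/(0)$ depending only on the coset $[u]_{\overline{x}}$. This single remark underpins all three parts.

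For part (1), with well-definedness in hand the inclusions are immediate: $U_{\overline{x}} \subseteq R_{\overline{x}}$ gives $\xux{x} \subseteq xR_{\overline{x}}$, and $xR_{\overline{x}} = \{\, xu + (0) \mid u \in R \,\}$ is by construction a subset of $R/(0)$. For part (2), I would show the assignment $[u]_{\overline{x}} \mapsto x[u]_{\overline{x}}$ is injective: if $x[u]_{\overline{x}} = x[v]_{\overline{x}}$ then $x(u - v) = 0$, so $u - v \in \overline{x} = \Ann(x)$ and hence $[u]_{\overline{x}} = [v]_{\overline{x}}$; the reverse implication is exactly the well-definedness from part (1). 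For part (3), I would first verify that $V_x$ is a submonoid of $R$: it contains $1$ (take $v = 1$), and if $x u_1 v_1 = x$ and $x u_2 v_2 = x$ then, using commutativity, $x (u_1 u_2)(v_1 v_2) = (x u_1 v_1) u_2 v_2 = x u_2 v_2 = x$, so $u_1 u_2 \in V_x$. For the characterisation of $V_x$: if $u \in V_x$, say $xuv = x$, then $x(uv - 1) = 0$, so $uv - 1 \in \overline{x}$ and $[u]_{\overline{x}}[v]_{\overline{x}} = [1]_{\overline{x}}$, giving $[u]_{\overline{x}} \in U_{\overline{x}}$; conversely, if $[u]_{\overline{x}} [v]_{\overline{x}} = [1]_{\overline{x}}$ for some $v \in R$, then $uv - 1 \in \overline{x}$, i.e.\ $x(uv - 1) = 0$, i.e.\ $xuv = x$, so $u \in V_x$.

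No step here is genuinely hard; the only real care needed is keeping track of which quotient each statement lives in --- the elements $x[u]_{\overline{x}}$ naturally sit in $R/(0)$ and the cancellation in part (2) takes place there (or, equivalently, in $R$ via the canonical isomorphism $x \mapsto x + (0)$) --- and flagging each appeal to $x\overline{x} = (0)$ at the points where a well-definedness or cancellation question arises. So the ``main obstacle'' is really just bookkeeping around the identification of $R$ with $R/(0)$.
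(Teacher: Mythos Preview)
Your proposal is correct and follows essentially the same approach as the paper's own proof: both arguments rest on the identity $x\overline{x}=(0)$, use it to identify $x[u]_{\overline{x}}$ with $xu+(0)\in R/(0)$, deduce the cancellation in part~(2) from $x(u-v)=0\Rightarrow u-v\in\overline{x}$, and translate the unit condition $[u]_{\overline{x}}[v]_{\overline{x}}=[1]_{\overline{x}}$ into $xuv=x$ via $uv-1\in\overline{x}$. The only cosmetic differences are that you spell out the well-definedness of $x[u]_{\overline{x}}$ and the submonoid verification for $V_x$ explicitly (the paper leaves these as ``obvious'' and ``fairly clear''), and in the forward direction of part~(3) the paper invokes part~(2) where you argue directly from the annihilator definition---but these amount to the same computation.
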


\begin{proof}
\begin{enumerate}
\item For any $u \in R$ we have $x[u]_{\overline{x}} = x(u+\overline{x}) = xu + x\overline{x} = xu + (0) \in R/(0)$ and so  $xR_{\overline{x}} \subseteq R/(0)$.

\item Let $x[u]_{\overline{x}} = x[v]_{\overline{x}}$. Then $xu-xv\in(0)$ and so $x(u-v)=0$. Hence $u-v \in \overline{x}$ and so $[u]_{\overline{x}} = [v]_{\overline{x}}$. The converse is obvious.

\item That $V_x$ is a submonoid of $R$ is fairly clear. Suppose that $[u]_{\overline x}\in U_{\overline x}$ so that there exists $[v]_{\overline x} \in U_{\overline x}$ such that $[u]_{\overline x}[v]_{\overline x}=[1]_{\overline x}$. Then $uv-1\in{\overline x}$ and so $xuv=x$. Conversely, if $u,v\in R$ such that $xuv=x$ then $x[u]_{\overline x}[v]_{\overline x} = xuv+(0) = x+(0) = x[1]_{\overline x}$ and so from part (2) it follows that $[u]_{\overline x}\in U_{\overline x}$.
\end{enumerate}
\end{proof}

\begin{theorem} \label{xux-theorem}
Let $x,y \in R$. Then $x {\cal J} y$ if and only if $y+(0) \in \xux{x}$. Consequently the sets $\xux{x}$ are the ${\cal J}-$classes of $R/(0)$.
\end{theorem}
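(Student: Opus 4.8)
The plan is to unwind both sides into the language of divisibility and then invoke Lemma~\ref{cancellation-lemma}. Since $R$ is commutative with identity, $x\,{\cal J}\,y$ is equivalent to $xR = yR$, i.e.\ to the existence of $a,b\in R$ with $y = xa$ and $x = yb$. For the forward implication I would start from such $a,b$ and substitute to get $xab = x$; this says precisely that $a\in V_x$, so by Lemma~\ref{cancellation-lemma}(3) the coset $[a]_{\overline x}$ lies in $U_{\overline x}$. Then $x[a]_{\overline x} = xa + x\overline{x} = xa + (0) = y + (0)$, so $y+(0)\in\xux{x}$.

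For the converse, suppose $y+(0) = x[u]_{\overline x}$ with $[u]_{\overline x}\in U_{\overline x}$. By Lemma~\ref{cancellation-lemma}(1) we have $x[u]_{\overline x} = xu+(0)$, so $y = xu$ and hence $x\mid y$. Choosing $[v]_{\overline x}$ inverse to $[u]_{\overline x}$ and arguing as in the proof of Lemma~\ref{cancellation-lemma}(3) gives $xuv = x$, whence $x = yv$ and $y\mid x$; therefore $xR = yR$ and $x\,{\cal J}\,y$.

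For the final clause I would transport the equivalence just proved across the natural isomorphism $R\to R/(0)$, $x\mapsto x+(0)$, under which ${\cal J}$ on $R$ corresponds to ${\cal J}$ on $R/(0)$: the statement then reads that the ${\cal J}$-class of $x+(0)$ in $R/(0)$ is exactly $\xux{x}$ (note $\xux{x}\subseteq R/(0)$ by Lemma~\ref{cancellation-lemma}(1)), and since every element of $R/(0)$ has the form $x+(0)$ these sets are precisely the ${\cal J}$-classes. I do not anticipate a substantial obstacle; essentially all the work is packaged in Lemma~\ref{cancellation-lemma}, and the only point requiring a little care is the bookkeeping between $R$ and $R/(0)$ together with the observation that the $\xux{x}$ automatically partition $R/(0)$ because ${\cal J}$-classes do.
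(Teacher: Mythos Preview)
Your proposal is correct and follows essentially the same route as the paper's proof: both directions are handled by translating $x\,{\cal J}\,y$ into mutual divisibility $y=xu,\ x=yv$, and then invoking Lemma~\ref{cancellation-lemma}(3) to identify $[u]_{\overline x}\in U_{\overline x}$ via the relation $xuv=x$. The only cosmetic difference is that you treat the two implications in the opposite order and make the final ``consequently'' clause explicit via the isomorphism $R\to R/(0)$, which the paper leaves implicit.
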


\begin{proof}
Suppose $y+(0) \in \xux{x}$. Then $y+(0) = x[u]_{\overline{x}} = xu+(0)$ for some $u \in V_x$, and so $y=xu$. Since $[u]_{\overline{x}}$ is a unit, there exists $v \in V_x$ such that $xuv=x$ and so $yv+(0) = xuv+(0) = x+(0)$ and hence $x=yv$ and so $x {\cal J} y$.

\smallskip

Now let $x {\cal J} y$. Then there exists $u \in R$ such that $xu = y$. Then $y+(0) = xu+(0) = x[u]_{\overline{x}}$. Further, there exists $v \in R$ such that $x = yv$. Then $x = xuv$ and so from Lemma~\ref{cancellation-lemma}(3), $[u]_{\overline x}\in U_{\overline x}$ and $y+(0) \in \xux{x}$.
\end{proof}

From this and the fact that ${\cal J} = \ker(\delta)$ we immediately deduce
\begin{corollary}\label{xux-corollary}
Let $x,y\in R$. Then
$$
x {\cal J} y\text{ if and only if }\xux{x} = \xux{y}\text{ if and only if }(x) = (y).
$$
\end{corollary}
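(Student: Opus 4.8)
The statement is meant as a routine consequence of Theorem~\ref{xux-theorem} together with the earlier identification ${\cal J}=\ker(\delta)$, so the plan is simply to assemble these two facts, taking care over one small point in the middle equivalence. I would begin by disposing of $x\,{\cal J}\,y \iff (x)=(y)$: since ${\cal J}=\ker(\delta)$ and $\delta(x)=(x)$, we have $x\,{\cal J}\,y$ precisely when $\delta(x)=\delta(y)$, i.e. when $(x)=(y)$. Nothing further is required here.

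Next, for $x\,{\cal J}\,y \iff \xux{x}=\xux{y}$ I would proceed as follows. By Lemma~\ref{cancellation-lemma}(1) we have $\xux{x}\subseteq R/(0)$, so every element of $\xux{x}$ is a coset $z+(0)$ with $z\in R$, and by Theorem~\ref{xux-theorem} such a coset lies in $\xux{x}$ exactly when $x\,{\cal J}\,z$; thus $\xux{x}=\{z+(0)\mid z\in R,\ x\,{\cal J}\,z\}$. Since ${\cal J}$ is an equivalence relation (a congruence, in fact, by the Proposition), $x\,{\cal J}\,y$ forces $\{z\mid x\,{\cal J}\,z\}=\{z\mid y\,{\cal J}\,z\}$ and hence $\xux{x}=\xux{y}$. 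Conversely, applying Theorem~\ref{xux-theorem} to the trivial relation $y\,{\cal J}\,y$ gives $y+(0)\in\xux{y}$, so if $\xux{x}=\xux{y}$ then $y+(0)\in\xux{x}$ and Theorem~\ref{xux-theorem} yields $x\,{\cal J}\,y$. Alternatively one can phrase the whole middle equivalence in one line: the sets $\xux{x}$ are the ${\cal J}$-classes of $R/(0)$, hence pairwise disjoint or equal, so $\xux{x}=\xux{y}$ iff they share the common point $y+(0)$ iff $x\,{\cal J}\,y$.

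I do not expect any genuine obstacle: the argument is essentially bookkeeping, which is why the authors call the deduction immediate. The only two points deserving a moment's attention are recording the trivial membership $y+(0)\in\xux{y}$ (so that set-equality can be detected by a common point) and invoking Lemma~\ref{cancellation-lemma}(1) so that the elements of $\xux{x}$ are legitimately cosets modulo $(0)$ and Theorem~\ref{xux-theorem} can be applied to them; beyond that, the result drops out of the two cited facts.
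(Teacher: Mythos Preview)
Your proposal is correct and follows exactly the approach the paper intends: the corollary is stated as an immediate consequence of Theorem~\ref{xux-theorem} together with ${\cal J}=\ker(\delta)$, and you have simply spelled out those two implications carefully. The only difference is that the paper gives no written proof at all, whereas you supply the bookkeeping details (in particular the observation $y+(0)\in\xux{y}$) that justify the word ``immediate''.
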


Notice that $x{\cal J}_R y$ if and only if $\left(x+(0)\right){\cal J}_{R/(0)}\left(y+(0)\right)$ and that the sets $\xux{x}$ are the ${\cal J}-$classes of $R/(0)$. Consequently, if $x_1 +(0), x_2+(0) \in \xux{x}$ then
$$
x_1\;{\cal J}_{R}\;x_2\;{\cal J}_{R}\;x
$$
and so since ${\cal J}_R=\ker{\delta}$
$$
(x_1)=(x_2) = (x).
$$

Note also that since $u \in V_x$ if and only if $[u]_{\overline{x}} \in U_{\overline{x}}$ then $y \in xV_x$ if and only if $y+(0) \in \xux{x}$. It follows that $xV_x$ is the image of $\xux{x}$ under the natural isomorphism from $R/(0)$ to $R$ and hence is the ${\cal J}$-class of $R$ containing $x$. Our reason for working with $\xux{x}$ rather than $xV_x$ is due to Lemma~\ref{cancellation-lemma}(2): if $u,v \in V_x$ and $xu = xv$ then it is not necessarily the case that $u = v$. For example, if $e \in R$ is a non-unit idempotent then $1,e \in V_e$ and $e1 = ee = e$ but $e \neq 1$. This cancellation property is required for the following theorem.

\begin{theorem} \label{xux-group-theorem}
Let $R$ be a ring and let $x\in R$. The set $\xux{x}$ is a subsemigroup of $R/(0)$ if and only if $\delta(x)$ is an idempotent. It is in fact a subgroup which is  isomorphic to $U_{\overline x}$.
\end{theorem}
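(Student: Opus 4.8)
The plan is to prove the two implications separately; once the converse is set up correctly, the group structure and the isomorphism will drop out together. For the forward implication, suppose $\xux{x}$ is a subsemigroup of $R/(0)$. Since $1\in V_x$ (take $v=1$ in the definition of $V_x$) we have $x+(0)=x[1]_{\overline x}\in\xux{x}$, hence $x^2+(0)=(x+(0))(x+(0))\in\xux{x}$ by closure. Writing this element as $x[u]_{\overline x}=xu+(0)$ with $u\in V_x$ (Lemma~\ref{cancellation-lemma}) gives $x^2=xu$ in $R$, and choosing $v$ with $xuv=x$ we get $x^2v=x$, so $x\in(x^2)$. Since $(x^2)\subseteq(x)$ always, $(x)=(x^2)$, i.e.\ $\delta(x)$ is idempotent.

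For the converse, assume $\delta(x)=(x)$ is idempotent, so $x\in(x^2)$; fix $a\in R$ with $x=x^2a$ and set $e=xa$. I would first record the elementary facts
$$
e^2=x^2a^2=(x^2a)a=xa=e,\qquad ex=x^2a=x,\qquad \overline e=\overline x .
$$
The first says $e$ is a genuine idempotent of $R$ (not merely an idempotent modulo $(0)$); for the last, $xy=0$ forces $ey=a(xy)=0$, while $ey=0$ forces $xy=(x^2a)y=x(ey)=0$. From $x=ex\in(e)$ and $e=xa\in(x)$ we get $(e)=(x)$, so Corollary~\ref{xux-corollary} yields $\xux{x}=\xux{e}$, and moreover $U_{\overline e}=U_{\overline x}$ and $R_{\overline e}=R_{\overline x}$. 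Thus it suffices to analyse $\xux{e}$ for the idempotent $e$.

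For $e$ idempotent, consider $\theta\colon U_{\overline e}\to R/(0)$, $\theta([u]_{\overline e})=e[u]_{\overline e}$. By the definition of $\xux{e}$ its image is $\xux{e}$, and by Lemma~\ref{cancellation-lemma}(2) (applied with $e$ in place of $x$) it is injective, so $\theta$ is a bijection onto $\xux{e}$. It is also a homomorphism: for $u,v\in V_e$,
$$
\theta([u]_{\overline e})\theta([v]_{\overline e})=(e[u]_{\overline e})(e[v]_{\overline e})=e^2[uv]_{\overline e}=e[uv]_{\overline e}=\theta([u]_{\overline e}[v]_{\overline e}),
$$
the middle step being exactly where $e^2=e$ is used. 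In particular the product in $R/(0)$ of two elements of $\xux{e}$ lies in $\xux{e}$, so $\xux{x}=\xux{e}$ is a subsemigroup of $R/(0)$, and $\theta$ identifies it with the group of units $U_{\overline e}=U_{\overline x}$; being isomorphic to a group, it is a subgroup of $R/(0)$, with identity $e+(0)$.

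The step I expect to need the most care — and the reason for passing to $e$ at all — is that the naive bijection $x[u]_{\overline x}\mapsto[u]_{\overline x}$ is \emph{not} a homomorphism when $x$ itself fails to be idempotent: the identity of $\xux{x}$ is $e+(0)=xa+(0)$ rather than $x+(0)$, and in $(x[u]_{\overline x})(x[v]_{\overline x})=x^2[uv]_{\overline x}$ one cannot replace $x^2$ by $x$ in general. Replacing $x$ by the idempotent $e$ with the same principal ideal and the same annihilator removes this discrepancy, and the only genuine checks along the way are that $e^2=e$ holds on the nose in $R$ and that $(e)=(x)$, $\overline e=\overline x$, which are the short computations above.
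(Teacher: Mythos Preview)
Your proof is correct. The forward implication is the same idea as the paper's (you specialize to the element $x+(0)$ itself, while the paper argues with general $x_1,x_2\in\xux{x}$ via the congruence property of ${\cal J}$), but for the converse and the isomorphism you take a genuinely different route.

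The paper does not pass to an idempotent element of $R$. Instead, from $(x)=(x^2)$ it notes that $x+(0)$ and $x^2+(0)$ lie in the same ${\cal H}$-class of $R/(0)$ and invokes the classical fact that an ${\cal H}$-class containing $h$ and $h^2$ is a group; this disposes of closure and the group structure in one sentence, at the cost of appealing to standard Green's-relations machinery. For the isomorphism the paper then observes that $x=x^2k$ forces $[x]_{\overline x}\in U_{\overline x}$ and uses the map $\phi:\xux{x}\to U_{\overline x}$, $\phi(x[u]_{\overline x})=[xu]_{\overline x}$, which is multiplicative precisely because $x^2[uv]_{\overline x}=x[xuv]_{\overline x}$ and injective because $[x]_{\overline x}$ is cancellable. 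So rather than replacing $x$ by an idempotent, the paper absorbs the stray factor of $x$ into the target of the map. Your approach trades the appeal to ${\cal H}$-class theory for the explicit construction of $e=xa$ with $e^2=e$, $(e)=(x)$ and $\overline e=\overline x$; once that is in hand, the obvious bijection $\theta:[u]_{\overline e}\mapsto e[u]_{\overline e}$ is already a homomorphism, so closure, group structure, and the isomorphism all fall out of the single computation $e^2=e$. Your argument is more self-contained and arguably cleaner; the paper's is terser on the group claim and keeps $x$ itself as the reference point throughout.
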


\begin{proof}
Let $x_1+(0), x_2+(0) \in \xux{x}$. Then from above, $x_1{\cal J}x_2{\cal J}x$ and so since ${\cal J}$ is a congruence, it follows that $x_1x_2{\cal J}x^2$ and hence from Corollary~\ref{xux-corollary}, $x_1x_2+(0) \in \xux{x^2}$. But if $\xux{x}$ is a subsemigroup of $R/(0)$ then $x_1x_2+(0)\in \xux{x}$ and so $\xux{x}\cap\xux{x^2}\ne\emptyset$. Consequently $\xux{x} = \xux{x^2}$ and so $\delta(x) = (x)=(x^2)=\delta(x)^2$ by Corollary~\ref{xux-corollary}.

Conversely, if $(x) = (x^2)$ then $x{\cal J}x^2$ and so in particular $x+(0)$ and $x^2+(0)$ belong to the same ${\cal H}$-class of $R/(0)$, $\xux{x}$, and hence  this ${\cal H}$-class is a group.

\medskip

If $(x) = (x^2)$ then $x = x^2k$ for some $k\in R$ and so $x \in V_x$. Now define $\phi:\xux{x}\to U_{\overline x}$ by $\phi(x[u]_{\overline x}) = [xu]_{\overline x}$. By Lemma~\ref{cancellation-lemma}(3) this map is well-defined and it is clearly onto. In addition
\begin{gather*}
\phi(x[u]_{\overline x}x[v]_{\overline x}) = \phi(x^2[uv]_{\overline x}) = \phi(x[xuv]_{\overline x})\\ = [x^2uv]_{\overline x} = \left([xu]_{\overline x}\right)\left([xv]_{\overline x}\right)\\ = \phi(x[u]_{\overline x})\phi(x[v]_{\overline x}),
\end{gather*}
and so $\phi$ is a morphism. Finally, if $\phi(x[u]_{\overline x}) = \phi(x[v]_{\overline x})$ then $[xu]_{\overline x} = [xv]_{\overline x}$ and so $[x]_{\overline x}[u]_{\overline x} = [x]_{\overline x}[v]_{\overline x}$. Hence $[u]_{\overline x} = [v]_{\overline x}$ since $[x]_{\overline x}\in U_{\overline x}$. Therefore $x[u]_{\overline x} = x[v]_{\overline x}$ as required.
\end{proof}

\bigskip

For any $I \in D$, let $E_I = \{J \in E(D) \;|\; J \subseteq I\}$. This set is non-empty since $(0) \in E(D)$ and $(0) \subseteq I$ for any $I \in D$. We claim that
$$
\varepsilon(I) = \sum_{J \in E_I} J
$$
is the greatest element of $E_I$ with respect to subset inclusion. It is easy to see that $\varepsilon(I) \in D$ and for every $J \in E_I$, $J \subseteq \varepsilon(I)$. It remains to show that $\varepsilon(I) \in E(D)$ and $\varepsilon(I) \subseteq I$.

\smallskip

A general element of $\varepsilon(I)$ has the form $e_1 + \ldots + e_n$ where each $e_i$ lies in some ideal $J_i \in E_I$. Hence every element of $\varepsilon(I)$ is an element of $I$ and so $\varepsilon(I) \subseteq I$.
As $J_i \in E(D)$, $e_i \in J_iJ_i$. Then
$$
e_1 + \dots +e_n \in J_1J_1 + \ldots + J_nJ_n \subseteq (J_1 + \ldots + J_n)(J_1 + \ldots + J_n).
$$
Since each $J_i \in E_I$, $J_1 + \ldots + J_n \subseteq \varepsilon(I)$ so $\varepsilon(I) \subseteq \varepsilon(I)\varepsilon(I)$. The reverse inclusion holds for any ideal and so $\varepsilon(I) \in E(D)$ as required.

\medskip

This construction clearly describes a well-defined map $\varepsilon:D\to E(D)$. For each $e \in E(D)$ let
$$
D_e = \varepsilon^{-1}(e).
$$

\begin{proposition}\label{D-semilattice-proposition}
The multiplicative semigroup of $D$ is a semilattice of semigroups ${\cal S}[E(D);D_e]$.
\end{proposition}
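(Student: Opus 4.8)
The plan is to apply the observation recorded just before the statement: an onto morphism $\phi\colon S\to Y$ onto a semilattice $Y$ exhibits $S$ as ${\cal S}[Y,\phi^{-1}(\alpha)]$. Here $S$ is the multiplicative semigroup of $D$, $Y=E(D)$ and $\phi=\varepsilon$, so three things need checking: that $E(D)$ is a semilattice, that $\varepsilon$ is surjective, and --- the one substantive point --- that $\varepsilon$ is multiplicative.

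That $E(D)$ is a semilattice is immediate: it is the set of idempotents of the commutative semigroup $D$, and the idempotents of a commutative semigroup are closed under multiplication (if $e^2=e$ and $f^2=f$ then $(ef)^2=e^2f^2=ef$), so $E(D)$ is a commutative band. Surjectivity of $\varepsilon$ is also quick: for $e\in E(D)$ we have $e\in E_e$ since $e\subseteq e$, hence $e\subseteq\varepsilon(e)$, while $\varepsilon(e)\subseteq e$ always holds by the construction above, giving $\varepsilon(e)=e$.

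The key step is to verify $\varepsilon(IJ)=\varepsilon(I)\varepsilon(J)$ for all ideals $I,J$ of $R$, and here I would use the characterisation of $\varepsilon(I)$ as the greatest idempotent ideal contained in $I$, already established. For one inclusion, $\varepsilon(I)\varepsilon(J)$ is a product of idempotent ideals and hence idempotent, and since $\varepsilon(I)\subseteq I$, $\varepsilon(J)\subseteq J$ and ideal multiplication is monotone, $\varepsilon(I)\varepsilon(J)\subseteq IJ$; thus $\varepsilon(I)\varepsilon(J)\in E_{IJ}$ and so $\varepsilon(I)\varepsilon(J)\subseteq\varepsilon(IJ)$. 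For the reverse, $\varepsilon(IJ)$ is idempotent and $\varepsilon(IJ)\subseteq IJ\subseteq I\cap J\subseteq I$, so $\varepsilon(IJ)\in E_I$ and hence $\varepsilon(IJ)\subseteq\varepsilon(I)$; by symmetry $\varepsilon(IJ)\subseteq\varepsilon(J)$, whence $\varepsilon(IJ)=\varepsilon(IJ)\varepsilon(IJ)\subseteq\varepsilon(I)\varepsilon(J)$. Combining the two inclusions gives equality, and then the quoted remark finishes the proof with $D_e=\varepsilon^{-1}(e)$.

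I do not anticipate a serious obstacle; the argument is short once the greatest-idempotent description of $\varepsilon$ is in hand. The one point to be careful about is that the inclusion $\varepsilon(IJ)\subseteq\varepsilon(I)\varepsilon(J)$ genuinely uses the idempotence of $\varepsilon(IJ)$: an arbitrary ideal contained in both $I$ and $J$ need not be contained in $IJ$, so the step does not collapse to a triviality about $I\cap J$.
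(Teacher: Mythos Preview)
Your proposal is correct and follows essentially the same approach as the paper: both show that $E(D)$ is a semilattice, that $\varepsilon$ is surjective via $\varepsilon(e)=e$ for $e\in E(D)$, and that $\varepsilon$ is multiplicative. The only difference is cosmetic: the paper first proves the set equality $E_{IJ}=E_I\cap E_J$ and deduces $\varepsilon(IJ)=\varepsilon(I)\varepsilon(J)$ from it, whereas you establish the two inclusions $\varepsilon(I)\varepsilon(J)\subseteq\varepsilon(IJ)$ and $\varepsilon(IJ)=\varepsilon(IJ)^2\subseteq\varepsilon(I)\varepsilon(J)$ directly using the greatest-idempotent description --- but the underlying idea (an idempotent $K\subseteq I\cap J$ satisfies $K=K^2\subseteq IJ$) is identical.
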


\begin{proof}
As $R$ is commutative, $D$ is also commutative and hence $E(D)$ is a semilattice. If $I \in E(D)$ then clearly $I$ is the greatest element of $E_I$ so $\varepsilon(I) = I$ and $\varepsilon$ is a surjection onto the semilattice $E(D)$. It remains to show that $\varepsilon$ is a homomorphism.

Let $I, J \in D$ and $K \in E(D)$. If $K \in E_I \cap E_J$ then $K \subseteq I$ and $K \subseteq J$. Then $K = KK \subseteq IJ$ so $K \in E_{IJ}$. Conversely, if $K \in E_{IJ}$ then $K \subseteq IJ$. But $IJ \subseteq I$ so $K \subseteq I$ and $K \in E_I$. In a similar way, $K \in E_J$ and hence $K \in E_I \cap E_J$ and $E_{IJ} = E_I \cap E_J$.

It is easily seen that for all $L\in D$, $E_L = E_{\varepsilon(L)}$ and hence $$E_{\varepsilon(IJ)} = E_{IJ} = E_I \cap E_J = E_{\varepsilon(I)} \cap E_{\varepsilon(J)} = E_{\varepsilon(I)\varepsilon(J)}.$$ As $E(D)$ is a semilattice $\varepsilon(I)\varepsilon(J) \in E(D)$ and so it follows that $\varepsilon(IJ) = \varepsilon(I)\varepsilon(J)$ as required.
\end{proof}

For each $e \in E(D)$ let $R_e = (\ed)^{-1}(e)$.

\begin{theorem} \label{R-semilattice-theorem}
The multiplicative semigroup of $R$ is a semilattice of semigroups ${\cal S}[\Ima(\ed);R_e]$.
\end{theorem}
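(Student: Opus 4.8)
The plan is to realise $\ed$ as a composite of multiplicative homomorphisms and then invoke the observation recorded in the preliminaries that an onto morphism from a semigroup to a semilattice induces a semilattice decomposition. First I would note that $\delta:R\to D$ is a semiring homomorphism, hence in particular a homomorphism of multiplicative semigroups, and that $\varepsilon:D\to E(D)$ is a multiplicative homomorphism by Proposition~\ref{D-semilattice-proposition}. Consequently $\ed=\varepsilon\circ\delta:R\to E(D)$ is a homomorphism of multiplicative semigroups, with $\ed(x)=\varepsilon((x))$ for $x\in R$.

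Next I would identify $\Ima(\ed)$ as a semilattice. Being the image of a semigroup homomorphism, $\Ima(\ed)$ is a subsemigroup of $E(D)$; since every element of $E(D)$ is idempotent and $D$ is commutative (because $R$ is), $\Ima(\ed)$ is a commutative band, i.e.\ a semilattice. Note that $\Ima(\ed)$ need not coincide with $E(D)$, since $\delta$ is typically not surjective, but this is harmless: the relevant codomain for the decomposition is precisely $\Ima(\ed)$.

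Finally, corestricting yields an onto morphism $\ed:R\to\Ima(\ed)$ onto a semilattice, so by the general fact stated in the preliminaries (if $Y$ is a semilattice and $\phi:S\to Y$ is an onto morphism, then $S={\cal S}[Y,\phi^{-1}(\alpha)]$) it follows that the multiplicative semigroup of $R$ is the semilattice of semigroups ${\cal S}[\Ima(\ed);(\ed)^{-1}(e)]={\cal S}[\Ima(\ed);R_e]$. The argument is essentially bookkeeping; the only point needing a moment's care is confirming that the image is genuinely a subsemilattice of $E(D)$ (so that the assertions ``each $R_e$ is a semigroup'' and ``$R_eR_f\subseteq R_{ef}$'' are exactly what the general fact delivers), after which there is no real obstacle.
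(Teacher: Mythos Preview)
Your proposal is correct and follows essentially the same approach as the paper's own proof: both verify that $\ed$ is a surjective multiplicative homomorphism onto its image, observe that $\Ima(\ed)$, being a subsemigroup of the semilattice $E(D)$, is itself a semilattice, and then invoke the general fact from the preliminaries. Your version is simply more explicit about why $\ed$ is a homomorphism (as a composite of $\delta$ and $\varepsilon$) and about the possible properness of $\Ima(\ed)$ in $E(D)$, whereas the paper compresses this into two sentences.
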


\begin{proof}
It is clear that $\ed$ is a surjective homomorphism from $R$ onto its image. Since $\Ima(\ed)$ is a subsemigroup of the semilattice $E(D)$, it is also a semilattice.
\end{proof}

We now want to consider the nature of the semigroups $D_e$ and $R_e$ for a specific type of ring.

\section{Dedekind domains}\label{section-3}

Let $R$ be a Dedekind domain. We wish in the next section to consider quotients of Dedekind domains but we first make some observations about Dedekind domains in general. While the semigroup structure of these rings is not too complex, it is interesting in its own right. We will show that $R$ is a semilattice of stratified extensions of groups. More specifically, $R$ is a semilattice of two semigroups; its group of units and a stratified extension of the trivial group. Recall that $D$ is the collection of all ideals of $R$ and that $\varepsilon\delta(x)$ is the largest idempotent ideal contained in $(x)$.

\begin{proposition}
The idempotents of $D$ are $R$ and $(0)$.
\end{proposition}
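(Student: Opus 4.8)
The two obvious idempotents are present, since $R\cdot R = R$ and $(0)\cdot(0) = (0)$; so the plan is to take an arbitrary idempotent $I \in E(D)$ with $I \neq (0)$ and show that necessarily $I = R$.

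Since $R$ is a domain and $I \neq (0)$, I would fix a non-zero element $a \in I$. Then $(a) \subseteq I$, and by the divisibility characterisation of Dedekind domains recalled in the preliminaries this gives $I \mid (a)$, so $(a) = IJ$ for some ideal $J$ of $R$. Feeding in $I^2 = I$ then yields
$$
(a) = IJ = I^2 J = I(IJ) = I(a).
$$
Next I would observe that $I(a) = aI$ (both are the ideal generated by $\{ai : i \in I\}$), so $aR = aI$, and cancel $a$: if $ar = ai$ with $i \in I$ then $a(r-i) = 0$, hence $r = i \in I$ because $R$ is a domain; thus $R \subseteq I$ and $I = R$.

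The argument is short, and the only step that needs a little care is the passage to a principal ideal $(a)$ — this is precisely what converts the ideal-theoretic identity $(a) = I(a)$ into a genuine cancellation inside the integral domain. I do not expect a real obstacle here: alternatively one could use that $R$ is Noetherian and apply the determinant/Nakayama trick to the finitely generated ideal $I$ satisfying $I = I\cdot I$, obtaining $a \in I$ with $(1-a)I = 0$ and hence $1 \in I$, or simply quote that non-zero ideals of a Dedekind domain are cancellable and cancel $I$ in $I^2 = IR$. In any case no uniqueness of prime factorisation is required.
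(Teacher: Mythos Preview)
Your proof is correct and takes a genuinely different route from the paper's. The paper argues by unique prime factorisation: a non-zero proper ideal $I$ has a unique factorisation $I = X_1\cdots X_n$ into primes, and then $I^2 = X_1\cdots X_n X_1\cdots X_n$ would be a second factorisation, a contradiction. You instead use only the ``to contain is to divide'' characterisation from the preliminaries together with cancellation in a domain: from $(a)\subseteq I$ you obtain $(a)=IJ$, whence $(a)=I(a)=aI$, and cancelling the non-zero-divisor $a$ forces $I=R$.

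Your argument is slightly more elementary in that it never invokes the \emph{uniqueness} of prime factorisation, only the divisibility equivalence; the Nakayama and cancellability variants you mention at the end are equally valid and equally avoid uniqueness. The paper's approach, on the other hand, is in keeping with the rest of Sections~3 and~4, where explicit prime factorisations of ideals are the main computational tool, so it sets up the machinery that is about to be used repeatedly. Either proof is perfectly acceptable here.
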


\begin{proof}
As $R$ is a Dedekind domain, every non-zero proper ideal $I$ of $R$ can be factorised uniquely as a product of prime ideals, so $I = X_1\ldots X_n$ for some prime ideals $X_i\unlhd R$. If $I \in E(D)$ then $I = I^2 = X_1\ldots X_nX_1\ldots X_n$ is another factorisation of $I$ into prime ideals. This contradicts the uniqueness of the factorisation, and so $I$ cannot be idempotent. Hence there are no idempotent non-zero proper ideals of $R$ and so $E(D) = \{R, (0)\}$.
\end{proof}

Clearly for any ideal $I$ of $R$ we have $(0) \subseteq I \subseteq R$ and so $\varepsilon(I) = R$ if $I = R$ and $\varepsilon(I) = (0)$ otherwise. Note also that $R = \ed(1)$ and $(0) = \ed(0)$ so $\ed$ is surjective, and in addition $D_R$ is the trivial semigroup and hence is vacuously a stratified extension of a group. 

\begin{proposition}
The subsemigroup $D_{(0)}$ is a stratified extension of the trivial group $\{(0)\}$.
\end{proposition}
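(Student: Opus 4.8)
The plan is to compute $\Base(D_{(0)})$ directly and show it equals $\{(0)\}$, which under ideal multiplication is a one-element (hence trivial) group; since this base is non-empty, $D_{(0)}$ will then be a stratified extension of it by the very definition of stratified extension. First I would pin down $D_{(0)}$ concretely. Since $E(D) = \{R,(0)\}$ and, by the remark preceding this proposition, $\varepsilon(I) = R$ exactly when $I = R$, the set $D_{(0)} = \varepsilon^{-1}((0))$ is precisely the set of all proper ideals of $R$. It is a subsemigroup by Proposition~\ref{D-semilattice-proposition}, and it contains $(0)$ since $(0) \ne R$.

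Next I would dispatch the easy inclusion $\{(0)\} \subseteq \Base(D_{(0)})$. For every $m \ge 1$ the zero ideal can be written as a product of $m$ proper ideals, namely $(0) = (0)\cdots(0)$; hence $(0) \in (D_{(0)})^m$ for all $m$, and therefore $(0) \in \Base(D_{(0)})$.

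The substantive step is the reverse inclusion, and this is where I would invoke unique factorisation into primes. Suppose $I \in \Base(D_{(0)})$ but $I \ne (0)$. Then $I$ is a non-zero proper ideal, so it has a factorisation $I = X_1 \cdots X_n$ into prime ideals, and by uniqueness the number $n \ge 1$ of factors is an invariant of $I$; call it the length of $I$. On the other hand, membership of $\Base(D_{(0)})$ means that for each $m \ge 1$ we may write $I = J_1 \cdots J_m$ with every $J_i$ a proper ideal. No $J_i$ can be $(0)$, for that would force $I = (0)$; so each $J_i$ is a non-zero proper ideal and factors into at least one prime. Splicing these factorisations together exhibits $I$ as a product of at least $m$ primes, so uniqueness of the factorisation gives $m \le n$. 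As this must hold for every $m$ we get a contradiction, so $I = (0)$ and therefore $\Base(D_{(0)}) = \{(0)\}$.

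Finally I would note that $\{(0)\}$ is closed under multiplication with $(0)$ serving as identity and inverse, so it is the trivial group, and since $\Base(D_{(0)}) \ne \emptyset$ the semigroup $D_{(0)}$ is by definition a stratified extension of it. I do not expect a serious obstacle here; the only points requiring a little care are the bookkeeping around the zero ideal (it may legitimately occur as a factor, but only when $I=(0)$) and the fact that unique factorisation applies only to non-zero proper ideals — which is exactly the feature that prevents any non-zero ideal from sitting in the base.
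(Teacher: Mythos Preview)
Your argument is correct and is essentially the same as the paper's: both hinge on unique prime factorisation to rule out any non-zero proper ideal from $\Base(D_{(0)})$. The paper is marginally more informative in that it records the exact layer of each non-zero $I$ (namely $I \in D_{(0)}^n \setminus D_{(0)}^{n+1}$ where $n$ is the number of prime factors), whereas you go straight to the contradiction with $m \le n$ for all $m$; but for the proposition as stated the two are equivalent.
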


\begin{proof}
Let $I$ be a non-zero proper ideal of $R$ so $I \in D_{(0)}$. Suppose $I$ factors uniquely as a product of $n$ prime ideals, $I = X_1\ldots X_n$. Each $X_i$ is a non-zero proper ideal of $R$ so lies in $D_{(0)}$ and hence $I \in {D_{(0)}}^n$. If $I \in {D_{(0)}}^{n+1}$ then $I = Y_1\ldots Y_{n+1}$ for some $Y_i \in D_{(0)}$. Since $I$ is non-zero, clearly each $Y_i$ is non-zero and so factors as a product of prime ideals. But then $I$ can be written as a product of at least $n+1$ prime ideals, contradicting the uniqueness of the previous factorisation. Hence $I \not\in {D_{(0)}}^{n+1}$ and so $I \in {D_{(0)}}^n \setminus {D_{(0)}}^{n+1}$. As this holds for every non-zero ideal of $R$, we have $\Base(D_{(0)}) = \{(0)\}$ and hence $D$ is a stratified extension of the trivial group.
\end{proof}

\begin{theorem}\label{domain-semilattice-theorem}
Let $R$ be a Dedekind domain. Then $R$ is a semilattice of stratified extensions of groups.
\end{theorem}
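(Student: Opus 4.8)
The plan is to read this off Theorem~\ref{R-semilattice-theorem} together with the two propositions just proved about the ideal semiring $D$. Since $E(D) = \{R,(0)\}$ and $\ed$ is surjective, we have $\Ima(\ed) = \{R,(0)\}$, a two-element (chain) semilattice, so Theorem~\ref{R-semilattice-theorem} already expresses the multiplicative semigroup of $R$ as $R = R_R\,\dot\cup\,R_{(0)}$ with $R_R R_{(0)} \subseteq R_{(0)}$. It then suffices to show that each of the two blocks $R_R$ and $R_{(0)}$ is a stratified extension of a group.

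First I would identify $R_R$. Since $\ed(x) = \varepsilon((x))$ and $\varepsilon(I) = R$ exactly when $I = R$, we get $R_R = \{x \in R \mid (x) = R\}$, which is precisely the group of units $U(R)$. As a group this is regular, so Lemma~\ref{paper1-lemma} gives $\Base(R_R) = R_R$, and hence $R_R$ is (trivially) a stratified extension of the group $R_R$.

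Next I would treat $R_{(0)}$. Dually, $R_{(0)} = \{x \in R \mid (x) \ne R\}$ is the set of non-units, which contains $0$. The homomorphism $\delta$ restricts to a homomorphism $R_{(0)} \to D_{(0)}$, because $x \in R_{(0)}$ means exactly $\varepsilon((x)) = (0)$, i.e. $(x) \in D_{(0)}$. Since we have already shown $\Base(D_{(0)}) = \{(0)\}$, Lemma~\ref{paper1-lemma2} gives $\Base(R_{(0)}) \subseteq \{x \in R_{(0)} \mid (x) = (0)\} = \{0\}$; and $0 \in \Base(R_{(0)})$ since $0 = 0^m$ for every $m$. Hence $\Base(R_{(0)}) = \{0\}$ and $R_{(0)}$ is a stratified extension of the trivial group. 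Combining the two blocks, $R$ is a semilattice of stratified extensions of groups.

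I expect no serious obstacle here: the statement is essentially an assembly of what has already been proved. The one point to watch is that a Dedekind domain need not be a principal ideal domain, so $\delta|_{R_{(0)}}$ need not be onto $D_{(0)}$; but Lemma~\ref{paper1-lemma2} only requires it to be a homomorphism, so surjectivity is irrelevant to the computation of $\Base(R_{(0)})$.
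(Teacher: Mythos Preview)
Your proof is correct and follows essentially the same route as the paper: invoke Theorem~\ref{R-semilattice-theorem} with $E(D)=\{R,(0)\}$, identify $R_R$ with the group of units, and use Lemma~\ref{paper1-lemma2} together with $\Base(D_{(0)})=\{(0)\}$ to conclude $\Base(R_{(0)})=\{0\}$. Your extra care in noting that $\delta$ restricts to a morphism $R_{(0)}\to D_{(0)}$ and that surjectivity of this restriction is not needed is a nice clarification that the paper leaves implicit.
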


\begin{proof}
Since $\ed$ is a surjection, by Theorem~\ref{R-semilattice-theorem}, $R$ is a semilattice of semigroups ${\cal S}[E(D);R_e]$. Clearly $\delta(x) = R$ if and only if $x$ is a unit of $R$, and so $R_R$ is exactly the group of units of $R$. For $R_{(0)}$, by Lemma~\ref{paper1-lemma2}, $\Base(R_{(0)}) \subseteq \delta^{-1}(\Base(D_{(0)})) = \delta^{-1}((0)) = \{0\}$. Since $0$ is idempotent we have $0 \in \Base(R_{(0)})$ and so $\Base(R_{(0)}) = \{0\}$ and hence $R_{(0)}$ is a stratified extension of the trivial group.
\end{proof}

Note that $\Base(R_{(0)}) = \delta^{-1}(\Base(D_{(0)}))$. In general the layers within the stratified structure of $D_0$ and within $R_0$ will not be the same. However,

\begin{proposition}\label{pid-proposition}
Let $R$ be a Dedekind domain and let $i>1$. Then ${R_{(0)}}^i = \delta^{-1}({D_{(0)}}^i)$ if and only if $R$ is a principal ideal domain.
\end{proposition}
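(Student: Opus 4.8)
The plan is to prove the two inclusions separately. First note that ${R_{(0)}}^i\subseteq\delta^{-1}({D_{(0)}}^i)$ holds for every $i$ and needs nothing special about $R$: a non-unit has a proper principal ideal, so $\delta$ restricts to a morphism $R_{(0)}\to D_{(0)}$, and Lemma~\ref{paper1-lemma2} then gives ${R_{(0)}}^i\subseteq\delta^{-1}({D_{(0)}}^i)$. So everything rests on the reverse inclusion $\delta^{-1}({D_{(0)}}^i)\subseteq{R_{(0)}}^i$, which we must show holds precisely when $R$ is a principal ideal domain. It helps to reformulate both sides. Since the idempotent ideals of the Dedekind domain $R$ are $(0)$ and $R$, the set $D_{(0)}$ consists of all proper ideals of $R$; so, ignoring $0$ (which lies in both sets), for a nonzero non-unit $x$ we have $x\in{R_{(0)}}^i$ iff $x$ is a product of $i$ non-units, while $x\in\delta^{-1}({D_{(0)}}^i)$ iff $(x)$ is a product of $i$ proper nonzero ideals, equivalently iff $(x)$ has at least $i$ prime ideal factors counted with multiplicity.

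For the ``if'' direction, suppose $R$ is a principal ideal domain and take a nonzero non-unit $x$ with $(x)=I_1\cdots I_i$, each $I_j$ a proper nonzero ideal. Writing $I_j=(a_j)$ with $a_j$ a nonzero non-unit gives $(x)=(a_1\cdots a_i)$, so $x=ua_1\cdots a_i$ for a unit $u$ (using that $R$ is a domain and $x\ne 0$); absorbing $u$ into the first factor exhibits $x$ as a product of $i$ non-units.

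For the ``only if'' direction I would argue by contraposition, assuming $R$ is not a principal ideal domain and constructing, for each $i>1$, an element of $\delta^{-1}({D_{(0)}}^i)\setminus{R_{(0)}}^i$. A Dedekind domain is a principal ideal domain exactly when it is a unique factorisation domain, so $R$ is not a UFD; being atomic (every nonzero non-unit factors into irreducibles), $R$ therefore contains an irreducible $p$ that is not prime. Then $(p)$ is a proper nonzero non-prime ideal, so its prime factorisation $(p)=P_1\cdots P_k$ has $k\ge 2$, and no $P_j$ is principal: if $P_j=(a)$ then $(p)\subseteq P_j=(a)$, so $p=ab$ for some $b$, and since $a$ is a non-unit, irreducibility of $p$ makes $b$ a unit and hence $(p)=(a)=P_j$, contradicting $k\ge 2$. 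Now if $2\le i\le k$, then $x=p$ works: $(p)$ has $k\ge i$ prime factors, so $p\in\delta^{-1}({D_{(0)}}^i)$, whereas $p$, being irreducible, is not a product of $i\ge 2$ non-units. If $i>k$, take $x=p^m$ where $m$ is an integer with $i/k\le m<2i/k$ — one exists because this half-open interval has length $i/k>1$. Then $(x)=P_1^m\cdots P_k^m$ has $mk\ge i$ prime factors, so $x\in\delta^{-1}({D_{(0)}}^i)$; but every proper principal ideal dividing $(x)$ has at least two prime factors (a single $P_j$ is not principal), so in any factorisation of $x$ into non-units the number of factors is at most $\lfloor mk/2\rfloor\le i-1$, giving $x\notin{R_{(0)}}^i$.

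I expect the ``if'' direction and the reformulation to be routine; the real work is the ``only if'' direction for $i>k$, where the witness cannot simply be $p$ (whose principal ideal has too few prime factors once $i>k$) and must be engineered as a power $p^m$, using that none of $P_1,\dots,P_k$ is principal so that every proper principal divisor of $(p^m)$ swallows at least two prime factors — which is exactly what forces the number of non-unit factors of $p^m$ to stay below $i$.
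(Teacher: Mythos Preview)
Your proof is correct and follows essentially the same approach as the paper: both directions use the same core ideas (principal generators for the ``if'' direction, an irreducible-but-not-prime element for the contrapositive). In fact you go further than the paper, which only treats $i=2$ explicitly and then asserts that ``it is then an easy matter to extend this for all $i\ge 2$''; your construction of $p^m$ with $m\in[i/k,\,2i/k)$, together with the observation that none of the prime factors $P_j$ of $(p)$ is principal (so every proper principal divisor of $(p^m)$ has at least two prime factors), supplies that extension in full.
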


\begin{proof}
Note that by Lemma~\ref{paper1-lemma2}, ${R_{(0)}}^i\subseteq \delta^{-1}({D_{(0)}}^i)$ is always true for any commutative ring $R$.

Let $R$ be a principal ideal domain and let $x \in \delta^{-1}({D_{(0)}}^i)$. Then $\delta(x)=(x)$ can be factorised as a product of $i$ principal ideals $(x) = (x_1)\ldots(x_i) = (x_1\ldots x_i)$ with each $(x_j) \in D_{(0)}$. Hence for $1\le j\le i$, $x_j \in R_{(0)}$ and so $x_1\ldots x_i \in R_{(0)}$. Since $\delta(x) = \delta(x_1\ldots x_i)$ we have $x {\cal J} x_1\ldots x_i$ and so $x = x_1\ldots x_iu$ for some $u \in R$. Since
$$
\ed(x_iu) = \ed(x_i)\ed(u) = (0)\ed(u) = (0)
$$
then $x_iu \in R_{(0)}$ and it follows that $x \in {R_{(0)}}^i$.

For the converse, note that since $R$ is a Dedekind domain it is Noetherian and hence every non-zero, non-unit element can be factorised into a product of irreducible elements. It follows that every irreducible element of $R$ is prime if and only if $R$ is a unique factorisation domain and hence a principal ideal domain. Hence if $R$ is not a principal ideal domain there exists some $x \in R_{(0)}$ such that $x$ is irreducible but not prime (recall that $R_R$ consists of units which are not irreducible). Since $x$ is irreducible it cannot be written as a product of two non-unit elements of $R$ and hence $x \not\in {R_{(0)}}^2$. Since $x$ is not prime, $(x)$ is not a prime ideal and so has a unique factorisation as a product of prime ideals $X_1\ldots X_n$ for some $n>1$. In particular, $(x) \in {D_{(0)}}^2$ and so ${R_{(0)}}^2 \neq \delta^{-1}({D_{(0)}}^2)$. It is then an easy matter to extend this for all $i\ge2$.
\end{proof}

Notice then that the $i$-th layer of $R_{(0)}$ is the preimage of the $i$-th layer of $D_{(0)}$.

\begin{corollary}
An element $x \in R$ is prime if and only if $(x)$ lies in the first layer of $D_{(0)}$. Additionally, if $x \in R$ is prime then $x$ lies in the first layer of $R_{(0)}$. The converse holds only when $R$ is a PID.
\end{corollary}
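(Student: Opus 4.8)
The plan is to translate the two combinatorial conditions --- ``$(x)$ lies in the first layer of $D_{(0)}$'' and ``$x$ lies in the first layer of $R_{(0)}$'' --- into the ring-theoretic notions of \emph{prime element} and \emph{irreducible element} respectively, and then to invoke the unique factorisation facts already assembled above.

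First I would identify the first layer ${D_{(0)}}\setminus{D_{(0)}}^2$ of $D_{(0)}$. Every element of ${D_{(0)}}^2$ is a product of two proper ideals; in a Dedekind domain such a product, when non-zero, has a prime factorisation of length at least two by uniqueness of factorisation, while $(0)$ itself lies in ${D_{(0)}}^2$ (it equals $(0)(0)$). Conversely an ideal whose prime factorisation has length at least two is visibly a product of two non-zero proper ideals. Hence the first layer of $D_{(0)}$ is exactly the set of non-zero prime ideals. Since $x$ is a prime element precisely when $(x)$ is a non-zero prime ideal --- and then $(x)$ is automatically proper, so $x$ is a non-unit and therefore $x\in R_{(0)}$ --- the first equivalence drops out immediately.

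Next, for the second assertion I would use the standard fact that in a domain every prime element is irreducible, together with the analogous description of the first layer of $R_{(0)}$: since $R$ is a domain, a non-zero non-unit lies in ${R_{(0)}}^2$ exactly when it is a product of two non-units, so ${R_{(0)}}\setminus{R_{(0)}}^2$ is precisely the set of irreducible elements (the element $0$ lands in ${R_{(0)}}^2$ and hence in the base). Thus a prime element, being a non-zero non-unit that is irreducible, lies in the first layer of $R_{(0)}$. For the final sentence, note that ``every element of the first layer of $R_{(0)}$ is prime'' is literally the statement ``every irreducible element of $R$ is prime''; since $R$ is Dedekind, hence Noetherian, this is equivalent to $R$ being a unique factorisation domain, which for a Dedekind domain is equivalent to being a principal ideal domain --- and conversely irreducibles are prime in any PID. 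This is exactly the dichotomy exploited in the proof of Proposition~\ref{pid-proposition}, so I would simply cite it.

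I do not expect a genuine obstacle here: the whole argument is a matter of unwinding the definitions of the layers and quoting results recorded in the preliminaries and in Proposition~\ref{pid-proposition}. The only point that needs a little care is the bookkeeping of units and of the zero element --- one has to check that prime and irreducible elements really do sit inside $R_{(0)}$ (equivalently, that their principal ideals are non-zero and proper) and are not swallowed by the base $\{(0)\}$ of $D_{(0)}$ or the base $\{0\}$ of $R_{(0)}$ --- but each such check is immediate.
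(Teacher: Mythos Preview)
Your proposal is correct and aligns with the paper's intent: the corollary is stated there without proof, as an immediate consequence of the analysis in Proposition~\ref{pid-proposition} and the description of $D_{(0)}$ given just before it. Your unpacking --- identifying the first layer of $D_{(0)}$ with the non-zero prime ideals, the first layer of $R_{(0)}$ with the irreducibles, and then invoking the Dedekind/UFD/PID equivalence exactly as in the proof of Proposition~\ref{pid-proposition} --- is precisely the argument the paper leaves implicit.
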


\section{Quotients of Dedekind domains}\label{section-4}

Let $S$ be a Dedekind domain, $A\unlhd S$ and let $R = S/A$. We will demonstrate that $R$ is a semilattice of stratified extensions of groups. Note that when $A=(0), R \cong S$ and this case has effectively been considered in Section~\ref{section-3}. When $A=S$ then $R=\{0\}$ and this situation is trivial. Hence we shall assume in what follows that $S\ne A\ne(0)$.

\medskip

Let $D_A$ be the set of ideals of $S$ containing $A$ and define an operation $\ast$ on $D_A$ such that $X \ast Y = XY + A$. Then $$(X \ast Y) \ast Z = (XY + A) \ast Z = (XY + A)Z + A = XYZ + AZ + A = XYZ + A$$ and similarly $X \ast (Y \ast Z) = XYZ + A$ and so $\ast$ is associative. Note that for every $I \in D_A$, $I + A = I$.

\medskip

The following is well known (see for example~\cite[Third Isomorphism Theorem, Page 303]{cohn-82}), but as the result is normally presented as an isomorphism $D_A\to D$, we feel the proof is useful to present here.
\begin{lemma}
The map $\Phi: D \rightarrow D_A$ given by $\Phi(I) = \bigcup_{X \in I} X$ is an isomorphism.
\end{lemma}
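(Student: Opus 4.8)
The plan is to show that the map $\Phi: D \to D_A$ defined by $\Phi(I) = \bigcup_{X\in I} X$ is a well-defined semiring isomorphism, where the multiplication on $D_A$ is the operation $\ast$ defined above. Here $I$ is an ideal of $R = S/A$, i.e. a subset of $S/A$, and each $X\in I$ is a coset of $A$ in $S$; the union $\bigcup_{X\in I} X$ is then a subset of $S$. First I would check that $\Phi(I)$ is genuinely an ideal of $S$ containing $A$: since $I$ is an ideal of $R$, it contains the zero coset $A$ itself, so $A \subseteq \Phi(I)$; and closure under addition and under multiplication by elements of $S$ follows coset-wise from the corresponding closure properties of $I$ in $S/A$, using that addition and multiplication of cosets are defined representative-wise. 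So $\Phi$ lands in $D_A$.

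Next I would exhibit the inverse. The natural candidate is the quotient map $\pi: S \to S/A$ applied to ideals: for $J \in D_A$ (an ideal of $S$ with $A\subseteq J$), set $\Psi(J) = \pi(J) = \{x + A : x\in J\}$, which is an ideal of $R$. The two composites are then routine: $\Psi(\Phi(I)) = I$ because $\Phi(I)$ is precisely the union of the cosets lying in $I$, so pushing it back down recovers $I$; and $\Phi(\Psi(J)) = J$ because $A \subseteq J$ means $J$ is a union of $A$-cosets, and $\Phi$ reassembles exactly those cosets. This shows $\Phi$ is a bijection.

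It then remains to verify that $\Phi$ respects the two operations. For addition, I would check $\Phi(I + I') = \Phi(I) + \Phi(I')$: an element of $\Phi(I+I')$ lies in a coset of the form $(x+A)+(x'+A)$ with $x+A\in I$, $x'+A\in I'$, and such an element can be written $x+x'+a$ with $a\in A$; since $A\subseteq\Phi(I)$ this is in $\Phi(I)+\Phi(I')$, and conversely. For multiplication, the point is exactly why $D_A$ was given the twisted operation $\ast$ rather than ordinary ideal product: I would show $\Phi(II') = \Phi(I)\ast\Phi(I') = \Phi(I)\Phi(I') + A$. The inclusion $\Phi(I)\Phi(I')\subseteq\Phi(II')$ and $A\subseteq\Phi(II')$ are straightforward, while for the reverse inclusion an element of $\Phi(II')$ lies in a coset that is a finite sum of products $(x_i+A)(x_i'+A)$ with $x_i+A\in I$, $x_i'+A\in I'$, hence equals $\sum x_i x_i' + a$ for some $a\in A$, which sits in $\Phi(I)\Phi(I')+A$. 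Finally $\Phi$ sends the multiplicative identity $R$ of $D$ (the whole ring, i.e. the ideal of all cosets) to $\bigcup_{X\in S/A} X = S$, which is the identity of $(D_A,\ast)$, and sends the zero ideal $\{A\}$ of $R$ to $A$, the additive identity of $D_A$.

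The only mildly delicate point — and the one I would treat most carefully — is keeping the two levels straight: elements of $D$ are ideals of the quotient $S/A$, so their elements are cosets, and $\Phi$ flattens this two-tiered structure into a single ideal of $S$. Once the bookkeeping between cosets and their union-as-subset-of-$S$ is set up cleanly, every verification reduces to the representative-wise definitions of the coset operations together with the single observation that any ideal of $S$ containing $A$ is saturated under $A$-cosets. The twisted multiplication $\ast = XY + A$ is precisely what absorbs the ambiguity introduced when multiplying cosets, so the multiplicative compatibility, which might at first look like the obstacle, is in fact forced by the definition.
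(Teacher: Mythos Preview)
Your proposal is correct and follows essentially the same route as the paper: verify that $\Phi(I)$ is an ideal of $S$ containing $A$, establish bijectivity, and check multiplicative compatibility $\Phi(II') = \Phi(I)\ast\Phi(I')$ by the same double-inclusion computation with finite sums of products of coset representatives. The only cosmetic differences are that you establish bijectivity via an explicit inverse $\Psi(J)=J/A$ whereas the paper argues injectivity and surjectivity separately, and you additionally verify additive compatibility and preservation of identities (treating $\Phi$ as a semiring map), which the paper does not need and does not prove, since only the multiplicative structure $(D_A,\ast)$ is used downstream.
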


\begin{proof}
Note that $x+A\in I$ if and only if $x \in \Phi(I)$.
\smallskip

We first show $\Phi$ is well defined. Let $x,y \in \Phi(I)$. Then $x+A, y+A \in I$ so $x+y+A \in I$ and hence $x+y \in \Phi(I)$. Similarly for any $z \in S$, $z + A \in R$ so $xz+A \in I$ and $xz \in \Phi(I)$ and hence $\Phi(I)$ is an ideal of $S$. Since $0 + A \in I$, $A \subseteq \Phi(I)$ and so $\Phi(I)\in D_A$.

To see that $\Phi$ is injective, if $\Phi(I) = \Phi(J)$ then we have $$x + A \in I \Leftrightarrow x \in \Phi(I) \Leftrightarrow x \in \Phi(J) \Leftrightarrow x + A \in J$$ so $I = J$.
For surjectivity, let $I$ be an ideal of $S$ containing $A$. Then $J = \{x + A \;|\; x \in I\}$ is clearly an ideal of $R$ and $\Phi(J) = I$.

Finally we show that $\Phi$ is a homomorphism. If $x \in \Phi(I) \ast \Phi(J) = \Phi(I)\Phi(J) + A$ then $x = x_1y_1 + \ldots  + x_ny_n +a$ where $a\in A$, $x_i +A \in I$ and $y_i +A \in J$ for each $i \in \{1,\ldots ,n\}$. Then $x_1y_1 + \ldots  + x_ny_n + A \in IJ$ so $x_1y_1 + \ldots  + x_ny_n \in \Phi(IJ)$ and $x_1y_1 + \ldots  + x_ny_n + a \in \Phi(IJ)+A = \Phi(IJ)$. Hence $\Phi(I) \ast \Phi(J) \subseteq \Phi(IJ)$. For the reverse inclusion let $x \in \Phi(IJ)$ so $x + A \in IJ$ and $x + A = (x_1 + A)(y_1 + A) + \ldots  + (x_n + A)(y_n + A) = x_1y_1 + \ldots  + x_ny_n + A$. Then $x - (x_1y_1 + \ldots  + x_ny_n) \in A$ so $x = x_1y_1 + \ldots  + x_ny_n + a$ for some $a \in A$. Hence $x \in \Phi(I)\Phi(J) + A = \Phi(I) \ast \Phi(J)$. Therefore $\Phi(I) \ast \Phi(J) = \Phi(IJ)$ and $\Phi$ is a homomorphism.
\end{proof}

Notice that if $K\in D_A$ then $\Phi^{-1}(K) = K/A$.

\medskip

Since $S$ is a Dedekind domain, every nonzero proper ideal factors into a product of prime ideals. Hence for all $I\in D, I\ne R$, $\Phi(I) = P_1P_2\ldots P_n$ for some prime ideals $P_i$ of $S$. Then $\Phi(I) = \Phi(I)+A = P_1\ldots P_n + A$ since $A \subseteq \Phi(I)$. For each $1\le i\le n$, $A \subseteq \Phi(I) \subseteq P_i$ so $P_i \in D_A$. Hence $\Phi(I) = P_1\ldots P_n + A = P_1 \ast \ldots  \ast P_n = \Phi(X_1) \ast \ldots  \ast \Phi(X_n)$ where $X_i = \Phi^{-1}(P_i) = P_i/A$ and so $I = X_1\ldots X_n$. Note that this is not necessarily a unique factorisation, as for example $(4)$ as an ideal of $\z_{12}$ can be written as $(2)(2)$ or as $(2)(2)(2)$. It is however a factorisation into prime ideals.

\begin{lemma}
The ideal $I$ is a prime ideal of $R$ if and only if $\Phi(I)$ is a prime ideal of $S$. 
\end{lemma}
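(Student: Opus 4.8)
The plan is to exploit the fact that $\Phi$ is a semiring isomorphism — it preserves both multiplication (shown as $\Phi(I)\ast\Phi(J)=\Phi(IJ)$) and, since it is given by a union, it preserves inclusion: for ideals $I,J$ of $R$ we have $I\subseteq J$ if and only if $\Phi(I)\subseteq\Phi(J)$, because $x+A\in I\iff x\in\Phi(I)$. Primeness of an ideal is expressible purely in terms of multiplication and inclusion (equivalently, in terms of divisibility, via the Dedekind characterisation $J\subseteq I\iff I\mid J$), so it should transfer across $\Phi$ essentially formally.

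Concretely, I would argue directly. Suppose first that $\Phi(I)$ is a prime ideal of $S$; I must show $I$ is a proper ideal of $R$ and that $ab\in I$ implies $a\in I$ or $b\in I$. Properness: if $I=R$ then $\Phi(I)=\Phi(R)=S$, contradicting that a prime ideal is proper by definition; so $I\neq R$. For the implication, take $a,b\in R$ with $ab\in I$, write $a=x+A$, $b=y+A$ with $x,y\in S$, so $xy+A=ab\in I$, whence $xy\in\Phi(I)$. Since $\Phi(I)$ is prime in $S$, either $x\in\Phi(I)$ or $y\in\Phi(I)$, i.e.\ $a=x+A\in I$ or $b=y+A\in I$. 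Conversely, suppose $I$ is prime in $R$. Then $I\neq R$, so $\Phi(I)\neq S$ and $\Phi(I)$ is a proper ideal of $S$. Given $x,y\in S$ with $xy\in\Phi(I)$, we have $(x+A)(y+A)=xy+A\in I$, so by primeness of $I$ either $x+A\in I$ or $y+A\in I$, that is $x\in\Phi(I)$ or $y\in\Phi(I)$; hence $\Phi(I)$ is prime in $S$.

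There is essentially no obstacle here: the only thing to be a little careful about is the \emph{properness} clause in the definition of a prime ideal, which rules out the degenerate correspondence $R\leftrightarrow S$ and must be checked in both directions; everything else is a direct translation through the bijection $x+A\leftrightarrow x$ between elements of $R=S/A$ and elements of $\Phi(I)\supseteq A$. Alternatively, and even more briefly, one could invoke the standard correspondence theorem for quotient rings, under which $\Phi$ restricted to the prime ideals of $R$ is exactly the bijection onto the prime ideals of $S$ containing $A$; but since the paper has already chosen to spell out the isomorphism $\Phi$ explicitly, the short hands-on argument above fits the surrounding style better.
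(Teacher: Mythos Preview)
Your argument is correct and follows essentially the same direct element-level approach as the paper, translating $x+A\in I$ to $x\in\Phi(I)$ and back in each direction. You are in fact slightly more careful than the paper's own proof, which omits the properness check that you include.
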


\begin{proof}
Suppose $\Phi(I)$ is a prime ideal of $S$ and let $(x +A)(y + A) = xy + A \in I$. Then $xy \in \Phi(I)$ and so without loss of generality $x \in \Phi(I)$. Hence $x + A \in I$ and so $I$ is a prime ideal. Conversely, suppose $I$ is a prime ideal of $R$ and let $xy \in \Phi(I)$. Then $xy + A \in I$ so without loss of generality $x + A \in I$ and hence $x \in \Phi(I)$ so $\Phi(I)$ is prime.
\end{proof}

The following lemma shows that the factorisation of $I$ into $\Phi^{-1}(P_1)\ldots \Phi^{-1}(P_n)$ is a {\em minimal prime factorisation}, in the sense that any other prime factorisation of $I$ must include each of these factors.

\begin{lemma}\label{uniqueness-lemma}
Let $I \in D$ be such that $\Phi(I)$ has a unique prime factorisation $P_1\ldots P_n$. If $X_1\ldots X_m$ is a prime factorisation of $I$ then $m \geq n$ and, up to reordering factors, $X_i = \Phi^{-1}(P_i)$ for $i \in \{1,\ldots ,n\}$.
\end{lemma}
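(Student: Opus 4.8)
The plan is to translate the statement into $S$ via the isomorphism $\Phi$ and use the unique prime factorisation in the Dedekind domain $S$. Applying $\Phi$ to the given prime factorisation $I = X_1 \cdots X_m$ of $I$ in $D$, and using that $\Phi$ is a homomorphism from $(D, \cdot)$ to $(D_A, \ast)$, I get $\Phi(I) = \Phi(X_1) \ast \cdots \ast \Phi(X_m) = \Phi(X_1) \cdots \Phi(X_m) + A$. By the preceding lemma each $\Phi(X_i)$ is a prime ideal of $S$, and since $A \subseteq \Phi(I) \subseteq \Phi(X_i)$ — the inclusion $\Phi(I)\subseteq\Phi(X_i)$ holding because $\Phi(I)$ is a product of ideals one of which is $\Phi(X_i)$ — each $\Phi(X_i)$ contains $A$. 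So $\Phi(X_1) \cdots \Phi(X_m) + A = \Phi(X_1) \cdots \Phi(X_m)$, and we conclude $\Phi(I) = \Phi(X_1) \cdots \Phi(X_m)$, a genuine prime factorisation of $\Phi(I)$ inside $S$.

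Now I would invoke uniqueness of prime factorisation in the Dedekind domain $S$: the multiset $\{\Phi(X_1), \ldots, \Phi(X_m)\}$ must coincide, as a multiset, with the multiset associated to $P_1 \cdots P_n$. Since $P_1, \ldots, P_n$ are the distinct prime ideals occurring (the hypothesis calls $P_1 \cdots P_n$ the unique prime factorisation, so I would read this as: these are the primes, possibly with the convention that repeats are allowed but the statement only claims each $P_i$ appears), each $P_i$ appears among the $\Phi(X_j)$. Hence $m \ge n$, and after reordering we may take $\Phi(X_i) = P_i$ for $1 \le i \le n$. Applying $\Phi^{-1}$ gives $X_i = \Phi^{-1}(P_i)$ for $i \in \{1, \ldots, n\}$, which is exactly the claim.

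The main subtlety to handle carefully is the bookkeeping around whether the factorisations are multisets with repetition or sets of distinct primes: the example with $(4) \unlhd \mathbb{Z}_{12}$ in the paragraph above shows that factorisations in $D$ are genuinely non-unique in length, so I must be precise that the hypothesis "$\Phi(I)$ has a unique prime factorisation $P_1 \cdots P_n$" refers to uniqueness in $S$ (where it is a theorem), and that the conclusion only asserts each $P_i$ is matched, not that $m = n$. Once that reading is fixed, the only real work is justifying $A \subseteq \Phi(X_i)$ for each $i$ and the fact that $\Phi$ and $\Phi^{-1}$ carry $\ast$-products to $\cdot$-products and back, both of which are immediate from the isomorphism lemma and the observation $\Phi(I) \subseteq \Phi(X_i)$. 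I expect no serious obstacle beyond this indexing care.
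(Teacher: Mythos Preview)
There is a genuine gap. From $\Phi(I)=\Phi(X_1)\cdots\Phi(X_m)+A$ and $A\subseteq\Phi(X_i)$ for each $i$, you conclude $\Phi(X_1)\cdots\Phi(X_m)+A=\Phi(X_1)\cdots\Phi(X_m)$, i.e.\ $A\subseteq\Phi(X_1)\cdots\Phi(X_m)$. But containment in each factor does not imply containment in the product. Concretely, take $S=\z$, $A=(4)$, $I=(0)\in D$, and the prime factorisation $I=X_1X_2X_3$ with each $X_j=(2)/A$ (valid since $(2)^2=(0)$ in $\z_4$, so $(2)^3=(0)$ too). Then $\Phi(X_1)\Phi(X_2)\Phi(X_3)=(8)$, and $A=(4)\not\subseteq(8)$. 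So your claimed equality $\Phi(I)=\Phi(X_1)\cdots\Phi(X_m)$ fails here: indeed $(4)\neq(8)$. If that equality held in general, uniqueness in $S$ would force $m=n$, contradicting the very non-uniqueness example you cite from the paper.

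The paper's proof avoids this by using only the one-sided containment $\Phi(X_1)\cdots\Phi(X_m)\subseteq\Phi(I)$, which is immediate from $\Phi(I)=\Phi(X_1)\cdots\Phi(X_m)+A$. In a Dedekind domain this containment is equivalent to $\Phi(I)\mid\Phi(X_1)\cdots\Phi(X_m)$, so $P_1\cdots P_n\,Q=\Phi(X_1)\cdots\Phi(X_m)$ for some ideal $Q$. Uniqueness of prime factorisation in $S$ then gives $m\ge n$ and, after reordering, $P_i=\Phi(X_i)$ for $i\le n$; applying $\Phi^{-1}$ finishes. Your argument is easily repaired by replacing the false equality with this divisibility step.
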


\begin{proof}
By definition, $$\Phi(I) = \Phi(X_1) \ast \ldots  \ast \Phi(X_m) = \Phi(X_1)\ldots \Phi(X_m) + A$$ so $\Phi(X_1)\ldots \Phi(X_m) \subseteq \Phi(I)$ and hence $\Phi(I)$ divides $\Phi(X_1)\ldots \Phi(X_m)$ as $S$ is a Dedekind domain. Then $P_1\ldots P_nQ = \Phi(X_1)\ldots \Phi(X_m)$ for some ideal $Q$ of $S$ so, by uniqueness of prime factorisations in $S$, we have $m \geq n$ and, reordering if necessary, $P_i = \Phi(X_i)$ for each $i \in \{1,\ldots ,n\}$. Applying $\Phi^{-1}$ to each equality then gives the desired result. 
\end{proof}

Suppose $A$ has prime factorisation $P_1^{e_1}\ldots P_n^{e_n}$ ($e_i > 0$). By definition, any ideal $\Phi(I) \in D_A$ has $A \subseteq \Phi(I)$ and so $\Phi(I)$ divides $A$ and hence $\Phi(I) = P_1^{f_1}\ldots P_n^{f_n}$ where $0 \leq f_i \leq e_i$. In particular, if $P$ is a prime ideal of $S$ then $A \subseteq P$ if and only if $P = P_i$ for some $i \in \{1,\ldots ,n\}$. Let
$$
A_i = \Phi^{-1}(P_i)
$$
for each $i \in \{1,\ldots ,n\}$. Then $A_1, \ldots , A_n$ are precisely the prime ideals of $R$ and any $I \in D$ has minimal prime factorisation $A_1^{f_1}\ldots A_n^{f_n}$, for some $f_i\ge0$. Notice that the primes $A_1, \ldots, A_n$ are unique with respect to this construction, by Lemma~\ref{uniqueness-lemma}.

\medskip

Note here that we adopt the convention $P_1^0,\ldots, P_n^0 = S$ and $A_1^0,\ldots, A_n^0 = R$, i.e. that the empty powers of primes are the identity elements of $D_A$ and $D$ respectively.

\begin{lemma}\label{minimal-factorisation-lemma}
Let $I \in D$. If $I$ has prime factorisation $A_1^{g_1}\ldots A_n^{g_n}$ then the minimal prime factorisation of $I$ is given by $A_1^{f_1}\ldots A_n^{f_n}$ where $f_i = \min(e_i,g_i)$. Hence a prime factorisation is minimal if and only if $0 \leq g_i \leq e_i$ for all $i \in \{1,\ldots ,n\}$.
\end{lemma}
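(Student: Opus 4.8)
The plan is to transport the given prime factorisation of $I$ through the isomorphism $\Phi$ into the Dedekind domain $S$, where prime factorisations are unique and sums of ideals are controlled by taking minima of prime exponents, and then read the answer back through $\Phi^{-1}$.

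First I would apply $\Phi$ to $I = A_1^{g_1}\cdots A_n^{g_n}$. Since $\Phi$ is a homomorphism from $(D,\cdot)$ onto $(D_A,\ast)$ with $\Phi(A_i) = P_i$, and since an iterated $\ast$-product satisfies $X_1 \ast \cdots \ast X_m = X_1\cdots X_m + A$ (the identity used above to verify associativity of $\ast$), this gives $\Phi(I) = P_1^{g_1}\cdots P_n^{g_n} + A$, with the empty-power convention taking care of any index with $g_i = 0$. Next I would use the standard fact (a one-line consequence of ``$J\subseteq I\iff I\mid J$'' in the Dedekind domain $S$) that the sum of two ideals is their greatest common divisor, which on prime factorisations is obtained by taking minima of exponents; since $A = P_1^{e_1}\cdots P_n^{e_n}$ this yields
$$
\Phi(I) = P_1^{g_1}\cdots P_n^{g_n} + P_1^{e_1}\cdots P_n^{e_n} = P_1^{f_1}\cdots P_n^{f_n}, \qquad f_i = \min(e_i,g_i).
$$

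Because $0\le f_i\le e_i$ for every $i$, the displayed equation is exactly the prime factorisation of $\Phi(I)$ in $S$ of the shape appearing in the construction preceding this lemma, so by that construction together with Lemma~\ref{uniqueness-lemma} the minimal prime factorisation of $I$ is $\Phi^{-1}(P_1)^{f_1}\cdots\Phi^{-1}(P_n)^{f_n} = A_1^{f_1}\cdots A_n^{f_n}$, which is the first assertion. For the ``hence'' statement: the only prime ideals of $R$ are $A_1,\dots,A_n$, so every prime factorisation of $I$ has the form $A_1^{g_1}\cdots A_n^{g_n}$ with $g_i\ge0$, and by uniqueness of the minimal factorisation (again Lemma~\ref{uniqueness-lemma}) it is minimal precisely when $g_i = f_i = \min(e_i,g_i)$ for all $i$, i.e. precisely when $0\le g_i\le e_i$ for all $i$. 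I do not expect a genuine obstacle here — the argument is essentially bookkeeping — but the step that needs care is keeping straight the three multiplications in play (ordinary ideal multiplication in $S$, the operation $\ast$ on $D_A$, and ideal multiplication in $D$) and ensuring the empty-product conventions are respected so that the argument remains valid when some of the exponents vanish.
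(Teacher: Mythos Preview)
Your proposal is correct and follows essentially the same route as the paper: transport through $\Phi$, compute $\Phi(I) = P_1^{g_1}\cdots P_n^{g_n} + A = P_1^{\min(g_1,e_1)}\cdots P_n^{\min(g_n,e_n)}$ using that sums of ideals in a Dedekind domain are governed by minima of exponents, and pull back. You are in fact more explicit than the paper's proof, which does not spell out the ``hence'' clause at all; your justification of that clause via uniqueness of the minimal factorisation is sound.
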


\begin{proof}
Let $I = A_1^{g_1}\ldots A_n^{g_n}$. Then 
\begin{align*}
\Phi(I) &= P_1^{g_1} \ast \ldots  \ast P_n^{g_n} \\
&= P_1^{g_1}\ldots P_n^{g_n} + A \\
&= P_1^{g_1}\ldots P_n^{g_n} + P_1^{e_1}\ldots P_n^{e_n} \\
&= P_1^{\min(g_1, e_1)}\ldots P_n^{\min(g_n,e_n)}
\end{align*}
is the unique prime factorisation of $\Phi(I)$ so $A_1^{\min(g_1, e_1)}\ldots A_n^{\min(g_n,e_n)}$ is the minimal prime factorisation of $I$.
\end{proof}

\begin{corollary}
Let $I = A_1^{i_1}\ldots A_n^{i_n}$ and $J = A_1^{j_1}\ldots A_n^{j_n}$ be minimal prime factorisations of $I,J \in D$. The minimal prime factorisation of $IJ$ is $$A_1^{\min(i_1+j_1, e_1)}\ldots A_n^{\min(i_n+j_n,e_n)}.$$
\end{corollary}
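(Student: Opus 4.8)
The plan is to reduce the statement immediately to Lemma~\ref{minimal-factorisation-lemma}. Since $R = S/A$ is commutative, multiplication of ideals in $D$ is commutative and associative, so from the two given factorisations we may simply collect powers:
$$
IJ = \left(A_1^{i_1}\ldots A_n^{i_n}\right)\left(A_1^{j_1}\ldots A_n^{j_n}\right) = A_1^{i_1+j_1}\ldots A_n^{i_n+j_n},
$$
using the convention $A_k^0 = R$ for the identity of $D$. Each $A_k$ is a prime ideal of $R$ and each exponent $i_k+j_k$ is a non-negative integer, so this displays a genuine prime factorisation of the ideal $IJ \in D$ — not necessarily the minimal one.

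Next I would apply Lemma~\ref{minimal-factorisation-lemma} to $IJ$ with $g_k = i_k + j_k$. That lemma says that whenever $IJ$ has a prime factorisation $A_1^{g_1}\ldots A_n^{g_n}$, its minimal prime factorisation is $A_1^{\min(e_1,g_1)}\ldots A_n^{\min(e_n,g_n)}$. Substituting $g_k = i_k+j_k$ yields exactly
$$
A_1^{\min(i_1+j_1,e_1)}\ldots A_n^{\min(i_n+j_n,e_n)},
$$
which is the claimed formula, and this completes the proof.

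There is essentially no obstacle. The only points worth checking with a little care are that the product really does collapse to the sum-of-exponents factorisation (pure commutativity and associativity of ideal multiplication in $R$) and that Lemma~\ref{minimal-factorisation-lemma} is stated for an \emph{arbitrary} prime factorisation, not just a minimal one, so that no hypothesis comparing $i_k+j_k$ with $e_k$ is required. One could alternatively push the whole computation through the isomorphism $\Phi$ and argue in $D_A$ with the primes $P_k$, where $\Phi(I)\ast\Phi(J) = P_1^{i_1+j_1}\ldots P_n^{i_n+j_n} + A = P_1^{\min(i_1+j_1,e_1)}\ldots P_n^{\min(i_n+j_n,e_n)}$ by the same manipulation used in the proof of Lemma~\ref{minimal-factorisation-lemma}, but the direct argument above is shorter.
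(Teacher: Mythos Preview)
Your proposal is correct and is exactly the argument the paper has in mind: the corollary is stated without proof, as an immediate consequence of Lemma~\ref{minimal-factorisation-lemma} applied to the factorisation $IJ = A_1^{i_1+j_1}\ldots A_n^{i_n+j_n}$. Nothing further is needed.
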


\medskip

We can now apply our methods from Section \ref{semilattice-section}, and in particular Proposition~\ref{D-semilattice-proposition}, to find the semilattice structure of $D$.
\begin{lemma}
Let $I \in D$ with minimal prime factorisation $A_1^{f_1}\ldots A_n^{f_n}$. Then $I \in E(D)$ if and only if $f_i \in \{0,e_i\}$ for all $i \in \{1,\ldots ,n\}$.
\end{lemma}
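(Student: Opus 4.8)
The plan is to reduce the idempotency condition $I = I^2$ to an arithmetic condition on the exponents $f_i$ by exploiting uniqueness of minimal prime factorisations, and then to settle that arithmetic condition by an elementary case analysis.

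First I would record that, by Lemma~\ref{minimal-factorisation-lemma}, writing $A_1^{f_1}\ldots A_n^{f_n}$ for the \emph{minimal} prime factorisation of $I$ already forces $0 \leq f_i \leq e_i$ for every $i$. Next, applying the corollary immediately preceding this lemma with $I = J$ (so that $i_k = j_k = f_k$), the minimal prime factorisation of $I^2 = II$ is $A_1^{\min(2f_1, e_1)}\ldots A_n^{\min(2f_n, e_n)}$.

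Now I would invoke uniqueness. Since $\Phi$ is an isomorphism, $\Phi(I^2) = \Phi(I)^2$, and a minimal prime factorisation $A_1^{g_1}\ldots A_n^{g_n}$ of an ideal corresponds under $\Phi$ to $P_1^{g_1}\ldots P_n^{g_n}$, which is the unique prime factorisation of $\Phi(I)$ in the Dedekind domain $S$; hence the exponents occurring in a minimal prime factorisation are uniquely determined. Consequently $I = I^2$ holds in $D$ if and only if $f_i = \min(2f_i, e_i)$ for every $i \in \{1,\ldots,n\}$.

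Finally I would analyse the equation $f_i = \min(2f_i, e_i)$ under the standing constraints $0 \leq f_i \leq e_i$ and $e_i \geq 1$. If $f_i = 0$ or $f_i = e_i$ the equation holds trivially. Conversely, if $0 < f_i < e_i$ then $2f_i > f_i$, and either $2f_i \leq e_i$, giving $\min(2f_i,e_i) = 2f_i > f_i$, or $2f_i > e_i$, giving $\min(2f_i,e_i) = e_i > f_i$; in both cases the equation fails. This yields precisely the claimed equivalence. The only step that needs any care is the appeal to uniqueness of minimal prime factorisations in order to convert the ideal identity $I = I^2$ into the numerical identity; the rest is routine.
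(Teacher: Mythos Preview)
Your proof is correct and follows essentially the same route as the paper: compute the minimal prime factorisation of $I^2$ via the preceding corollary, use uniqueness of minimal prime factorisations to reduce $I=I^2$ to the exponent equations $f_i=\min(2f_i,e_i)$, and then do an elementary case analysis. Your treatment of the uniqueness step (pushing through $\Phi$ to the Dedekind domain $S$) is in fact slightly more explicit than the paper's, and your case split on whether $2f_i\le e_i$ or $2f_i>e_i$ is just a minor repackaging of the paper's split at $e_i/2$.
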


\begin{proof}
Let $I \in D$ have minimal prime factorisation $A_1^{f_1}\ldots A_n^{f_n}$ so $I^2$ has minimal prime factorisation $A_1^{\min(2f_1, e_1)}\ldots A_n^{\min(2f_n,e_n)}$. If $f_i = 0$ then $\min(2f_i,e_i) = 0 = f_i$ and if $f_i = e_i$ then $\min(2f_i,e_i) = e_i = f_i$ so $I^2 = A_1^{f_1}\ldots A_n^{f_n} = I$.

Conversely, if $I^2 = I$ then $\min(2f_i,e_i) = f_i$ for all $i \in \{1,\ldots ,n\}$. Then if $f_i \leq e_i/2$ we have $2f_i = f_i$ so $f_i = 0$ and if $f_i > e_i/2$ we have $f_i = e_i$. Hence $f_i \in \{0,e_i\}$ for all $i \in \{1,\ldots ,n\}$.
\end{proof}

Let $N = \{1,\ldots,n\}$. The previous lemma shows that an idempotent $e$ is entirely determined by which $A_i$ have a non-zero power $f_i$, and hence we have a bijection $\Lambda: E(D) \to \pn$ given by $\Lambda(e) = \{i \in N | f_i = e_i\}$. If $\pn$ is equipped with the operation of union of sets it then becomes a semilattice and $\Lambda$ can easily seen to be an order isomorphism.  

Note that for every $I \in D$ there exists $e \in E(D)$ such that $I$ has minimal prime factorisation $\prod_{i \in \Lambda(e)}A_i^{f_i}$ where $0 < f_i \leq e_i$ for all $i \in \Lambda(e)$. In fact $I \in D_e$ if and only if its minimal prime factorisation can be written in this way. To see this, it is sufficient to observe that for any prime ideal $A_i$ we have $\varepsilon(A_i) = A_i^{e_i}$ as $\varepsilon$ is a homomorphism.

\begin{proposition}\label{layer-proposition}
Let $D_e$ be a subsemigroup of $D$ for some $e = \prod_{j \in \Lambda(e)} A_j^{e_j} \in E(D)$. Let $I \in D_e$ with minimal prime factorisation $\prod_{j \in \Lambda(e)} A_j^{f_j}$ for $0 < f_j \leq e_j$ and suppose that $I \neq e$. Then for each $i \geq 1$, $I \in {D_e}^i$ if and only if $\min\{f_j | f_j \neq e_j\} \geq i$.
\end{proposition}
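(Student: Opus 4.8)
The plan is to argue entirely in terms of minimal prime factorisations, using the product rule for such factorisations established above. Write $e = \prod_{j \in \Lambda(e)} A_j^{e_j}$, and let $I$ have minimal prime factorisation $\prod_{j \in \Lambda(e)} A_j^{f_j}$ with $0 < f_j \le e_j$. The first step is to upgrade the two-factor multiplication rule to an $i$-factor version: if $K_1,\dots,K_i \in D_e$ have minimal prime factorisations $\prod_{j \in \Lambda(e)} A_j^{f_j^{(\ell)}}$, so that $1 \le f_j^{(\ell)} \le e_j$ for every $j \in \Lambda(e)$, then the minimal prime factorisation of $K_1\cdots K_i$ is $\prod_{j \in \Lambda(e)} A_j^{\min(\sum_{\ell=1}^i f_j^{(\ell)},\,e_j)}$. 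This follows by induction on $i$ from the corollary on products of minimal prime factorisations, using the identity $\min(\min(a,e_j)+c,\,e_j) = \min(a+c,\,e_j)$, valid for $c \ge 0$. I would also note at the outset that, because $I \ne e$, the set $\{\,j \in \Lambda(e) : f_j \ne e_j\,\}$ is non-empty, so $m := \min\{f_j : f_j \ne e_j\}$ is a well-defined positive integer.

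For the ``only if'' direction, suppose $I = K_1\cdots K_i$ with each $K_\ell \in D_e$, and fix any $j \in \Lambda(e)$ with $f_j < e_j$. The $i$-factor rule gives $f_j = \min(\sum_\ell f_j^{(\ell)},\,e_j)$, and since $f_j < e_j$ this forces $\sum_{\ell=1}^i f_j^{(\ell)} = f_j$; as every $f_j^{(\ell)} \ge 1$, we conclude $f_j \ge i$. Minimising over all such $j$ gives $m \ge i$.

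For the ``if'' direction, assume $m \ge i$ and build the factors explicitly. For $j \in \Lambda(e)$ with $f_j = e_j$, put $f_j^{(\ell)} = e_j$ for every $\ell$. For $j \in \Lambda(e)$ with $f_j < e_j$, so that $f_j \ge m \ge i$, put $f_j^{(\ell)} = 1$ for $\ell < i$ and $f_j^{(i)} = f_j - i + 1$; one checks $1 \le f_j^{(\ell)} \le e_j$ in both cases, which is precisely where the hypothesis $m \ge i$ is used. Setting $K_\ell = \prod_{j \in \Lambda(e)} A_j^{f_j^{(\ell)}}$, Lemma~\ref{minimal-factorisation-lemma} shows this expression is the minimal prime factorisation of $K_\ell$, so $K_\ell \in D_e$ by the characterisation of $D_e$ given before Proposition~\ref{layer-proposition}. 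The $i$-factor rule then shows $K_1\cdots K_i$ has minimal prime factorisation $\prod_{j\in\Lambda(e)} A_j^{\min(\sum_\ell f_j^{(\ell)},\,e_j)}$, which equals $\prod_{j\in\Lambda(e)} A_j^{f_j}$ by the two cases; since two ideals with the same minimal prime factorisation are equal, $K_1\cdots K_i = I$, and hence $I \in {D_e}^i$.

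Beyond this bookkeeping, the only real content is the $i$-factor multiplication rule and the need to keep every chosen exponent $f_j^{(\ell)}$ inside the window $[1,e_j]$, so that each $K_\ell$ genuinely lands in $D_e$ rather than in some smaller $\varepsilon$-class. That constraint is exactly the point at which $m \ge i$ enters, and it is the step I would be most careful with.
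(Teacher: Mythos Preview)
Your argument is correct and follows essentially the same line as the paper's. Both directions are handled via the same exponent bookkeeping with minimal prime factorisations; the paper just packages things slightly differently. For ``only if'' the paper observes that every element of $D_e$ is divisible by $\prod_{j\in\Lambda(e)}A_j$, hence every element of ${D_e}^i$ admits a (not necessarily minimal) factorisation with all exponents at least $i$, and then reduces via Lemma~\ref{minimal-factorisation-lemma}---this is exactly your $i$-factor rule in disguise. For ``if'' the paper takes $g_j=\max(f_j,i)$, writes $J=\prod A_j^{g_j}=\bigl(\prod A_j\bigr)^{i-1}\prod A_j^{g_j-(i-1)}$ to see $J\in{D_e}^i$, and checks $J=I$; your explicit choice of $i$ factors amounts to the same splitting. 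Neither approach offers a real advantage over the other.
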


Note that $\{f_j | f_j \neq e_j \}$ is non-empty since $I \neq e$. Since $e$ is idempotent, $e \in {D_e}^i$ for all $i \in \n$.

\begin{proof}
By definition, $\prod_{j \in \Lambda(e)} A_j$ divides every element of $D_e$ so $\prod_{j \in \Lambda(e)} A_j^i$ divides every element of ${D_e}^i$. Then for every $I \in {D_e}^i$ there exists a prime factorisation $\prod_{j \in \Lambda(e)} A_j^{g_j}$ with $g_j \geq i$. By Lemma~\ref{minimal-factorisation-lemma} the minimal prime factorisation of $I$, $\prod_{j \in \Lambda(e)} A_j^{f_j}$, has $f_j = \min(e_j, g_j)$ so we have $f_j = g_j \geq i$ for every $f_j \neq e_j$ and hence $\min\{f_j | f_j \neq e_j\} \geq i$.

For the converse, suppose $I$ has minimal prime factorisation $\prod_{j \in \Lambda(e)} A_j^{f_j}$ such that $\min\{f_j | f_j \neq e_j\} \geq i$. Then for each $j \in \Lambda(e)$ either $f_j = e_j$ or $i\le f_j < e_j$. Let $g_j = \max(f_j, i)$ and $J = \prod_{j \in \Lambda(e)} A_j^{g_j}$. Then $J = \left(\prod_{j \in \Lambda(e)} A_j\right)^{i-1}\left(\prod_{j \in \Lambda(e)} A_j^{g_j-(i-1)}\right)$ so, as $g_j-(i-1) > 0$, $J \in {D_e}^i$. If $i < f_j < e_j$ then $g_j = f_j < e_j$. Otherwise, $g_j > f_j = e_j$. In either case, $\min(g_j,e_j) = f_j$ and hence $I$ and $J$ have the same minimal prime factorisation, so $I = J$ and $I \in {D_e}^i$.
\end{proof}

\begin{corollary}
The ideal $I = \prod_{j \in \Lambda(e)} A_j^{f_j}$ with $0<f_j\le e_j$ lies in the $i$-th layer of $D_e$, ${D_e}^i \setminus {D_e}^{i+1}$, if and only if $\min\{f_j | f_j \neq e_j\} = i$.
\end{corollary}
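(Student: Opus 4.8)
The plan is to deduce this immediately from Proposition~\ref{layer-proposition}. Recall that, by definition, the $i$-th layer of $D_e$ is the set ${D_e}^i \setminus {D_e}^{i+1}$, so $I$ lies in it exactly when $I \in {D_e}^i$ and $I \notin {D_e}^{i+1}$. The standing hypothesis that $D_e$ is a subsemigroup of $D$ is inherited from Proposition~\ref{layer-proposition} and is what makes the layers meaningful here; I would state this at the outset.

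First I would dispose of the degenerate case $I = e$. Here $f_j = e_j$ for every $j \in \Lambda(e)$, so the set $\{f_j \mid f_j \neq e_j\}$ is empty and its minimum is not a natural number, hence the right-hand side fails for every $i \ge 1$; on the other side, $e$ is idempotent, so $e \in {D_e}^i$ for all $i$, meaning $e$ lies in $\Base(D_e)$ and in no layer. So the biconditional holds (vacuously) in this case. Now assume $I \neq e$, so that $\{f_j \mid f_j \neq e_j\}$ is non-empty and $m := \min\{f_j \mid f_j \neq e_j\}$ is a well-defined positive integer. By Proposition~\ref{layer-proposition}, for every $k \ge 1$ we have $I \in {D_e}^k$ if and only if $m \ge k$. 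Therefore $I \in {D_e}^i$ if and only if $m \ge i$, and $I \notin {D_e}^{i+1}$ if and only if $m < i+1$, i.e.\ $m \le i$. Combining the two, $I \in {D_e}^i \setminus {D_e}^{i+1}$ if and only if $m = i$, which is exactly the assertion.

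I do not expect a genuine obstacle: this is a routine translation of Proposition~\ref{layer-proposition} into the language of layers. The only point requiring a little care is the treatment of $I = e$ and the convention that a minimum over the empty set is not a natural number, which is precisely why the corollary is phrased in terms of the layers rather than the base.
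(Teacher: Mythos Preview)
Your proposal is correct and is exactly the intended deduction: the paper states this as an immediate corollary of Proposition~\ref{layer-proposition} with no separate proof, and your argument is precisely the routine translation (apply the proposition at levels $i$ and $i+1$ and intersect). Your explicit handling of the degenerate case $I=e$ is a nice touch that the paper leaves implicit.
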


\begin{corollary}
For $e\in E(D)$, $\Base(D_e) = \{e\}$ and the subsemigroup $D_e$ is a stratified semigroup with zero.
\end{corollary}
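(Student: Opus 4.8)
The plan is to read off both assertions almost directly from Proposition~\ref{layer-proposition} and its corollary. First I would record that $D_e = \varepsilon^{-1}(e)$ really is a subsemigroup of $D$: it is non-empty, since $\varepsilon(e) = e$ for any $e \in E(D)$ and so $e \in D_e$, and it is multiplicatively closed, since $\varepsilon$ is a homomorphism onto $E(D)$ and hence $\varepsilon(IJ) = \varepsilon(I)\varepsilon(J) = e\cdot e = e$ whenever $I,J \in D_e$. Thus the notions of power, layer and base all apply to $D_e$.

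Next I would prove $\Base(D_e) = \{e\}$. The inclusion $e \in \Base(D_e)$ is immediate, because $e$ is idempotent and so $e = e^i \in {D_e}^i$ for every $i \ge 1$, whence $e \in \bigcap_{i\ge 1}{D_e}^i = \Base(D_e)$. For the converse, let $I \in D_e$ with $I \neq e$ and write its minimal prime factorisation as $\prod_{j \in \Lambda(e)} A_j^{f_j}$ with $0 < f_j \le e_j$. Since $I \neq e$, the set $\{f_j \mid f_j \neq e_j\}$ is non-empty, and as $\Lambda(e)$ is finite it has a (finite) minimum $m$. By Proposition~\ref{layer-proposition} (or the corollary just above), $I \in {D_e}^m$ but $I \notin {D_e}^{m+1}$, so $I \notin \Base(D_e)$. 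Hence $\Base(D_e) = \{e\}$.

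It then remains to see that $D_e$ is a stratified semigroup \emph{with zero}, i.e.\ that $e$ is a zero element of $D_e$. One clean way is to note that $\Base(S) = \bigcap_{m\ge 1}S^m$ is always a two-sided ideal of a semigroup $S$ (for each $m$ one has $S\cdot\Base(S) \subseteq S^m$ and $\Base(S)\cdot S \subseteq S^m$), so a singleton base is automatically a zero; applying this to $S = D_e$ finishes the argument. Alternatively one can check absorption directly: if $I = \prod_{j\in\Lambda(e)}A_j^{f_j} \in D_e$ then $eI$ has minimal prime factorisation $\prod_{j\in\Lambda(e)}A_j^{\min(e_j+f_j,\,e_j)} = \prod_{j\in\Lambda(e)}A_j^{e_j} = e$.

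I do not anticipate any genuine obstacle, since the substantive work is already contained in Proposition~\ref{layer-proposition}; the only points needing a little care are confirming that $D_e$ is non-empty and multiplicatively closed (so that ``base'' is the appropriate object here) and that the base element really does behave as a zero.
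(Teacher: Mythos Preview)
Your proposal is correct and is precisely the argument the paper intends: the corollary is stated there without proof, as an immediate consequence of Proposition~\ref{layer-proposition} and its preceding corollary, and you have simply written out those implicit details (including the verification that $e$ is absorbing, which the paper leaves to the reader).
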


\medskip

We can now easily prove the main theorem.
\begin{theorem}\label{main-theorem}
Let $S$ be a Dedekind domain, $A\unlhd S$ be an ideal of $S$ and let $R=S/A$. The multiplicative semigroup of $R$ is a semilattice ${\cal S}[E(D);R_e]$ of stratified extensions of groups.
\end{theorem}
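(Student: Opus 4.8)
The strategy is to combine the semilattice decomposition of Theorem~\ref{R-semilattice-theorem} with the group structure of idempotent ${\cal J}$-classes from Theorem~\ref{xux-group-theorem} and the functoriality of $\Base$ from Lemma~\ref{paper1-lemma2}. Concretely I would establish three things in turn: that $\ed$ maps $R$ onto all of $E(D)$; that for each $e\in E(D)$ the base $\Base(R_e)$ coincides with the ${\cal J}$-class $\delta^{-1}(e)$; and that this ${\cal J}$-class is a group.

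First I would check surjectivity. Since we are assuming $A\ne(0)$, the quotient $R=S/A$ is a principal ideal ring, so every ideal of $R$ is principal and hence lies in $\Ima(\delta)$; thus $\delta\colon R\to D$ is onto. As $\varepsilon\colon D\to E(D)$ is onto by Proposition~\ref{D-semilattice-proposition}, the composite $\ed$ is a surjection of $R$ onto $E(D)$, and Theorem~\ref{R-semilattice-theorem} then yields $R={\cal S}[E(D);R_e]$ with $R_e=(\ed)^{-1}(e)$.

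Next I would fix $e\in E(D)$ and prove that $\Base(R_e)=\delta^{-1}(e)$. One inclusion is formal: $R_e=\delta^{-1}(D_e)$, so $\delta$ restricts to a morphism $R_e\to D_e$, and Lemma~\ref{paper1-lemma2} together with the fact that $\Base(D_e)=\{e\}$ gives $\Base(R_e)\subseteq\delta^{-1}(e)$ (using that $\delta^{-1}(e)\subseteq R_e$, since $\varepsilon$ fixes $e$). For the reverse inclusion, note that $\delta^{-1}(e)$ is non-empty because $\delta$ is onto and, by Corollary~\ref{xux-corollary}, is a single ${\cal J}$-class of $R$; since $e$ is idempotent, Theorem~\ref{xux-group-theorem}, transported along the isomorphism $R\cong R/(0)$, shows this ${\cal J}$-class is a subgroup of the multiplicative semigroup of $R$, isomorphic to $U_{\overline{x_e}}$ for any chosen $x_e\in\delta^{-1}(e)$. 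This group lies in $R_e$, so each of its elements is a regular element of $R_e$, and Lemma~\ref{paper1-lemma} then places it inside $\Base(R_e)$. Hence $\Base(R_e)=\delta^{-1}(e)$ is a non-empty group.

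Finally, since $\Base(R_e)\ne\emptyset$ for every $e\in E(D)$, each $R_e$ is by definition a stratified extension of its base, and that base is a group; therefore $R={\cal S}[E(D);R_e]$ is a semilattice of stratified extensions of groups. I expect the main obstacle to be the reverse inclusion $\delta^{-1}(e)\subseteq\Base(R_e)$: it is precisely here that one needs the genuine group structure of the ${\cal J}$-class supplied by Theorem~\ref{xux-group-theorem} to produce the regularity that forces those elements into the base. The surjectivity of $\delta$, resting on $S/A$ being a principal ideal ring, is a small but indispensable input that would be easy to overlook.
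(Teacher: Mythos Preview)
Your argument is essentially the paper's own proof: surjectivity of $\ed$ via the principal ideal ring property of $S/A$, the inclusion $\Base(R_e)\subseteq\delta^{-1}(e)$ from Lemma~\ref{paper1-lemma2} and $\Base(D_e)=\{e\}$, and the reverse inclusion from Theorem~\ref{xux-group-theorem} together with Lemma~\ref{paper1-lemma}. The only omission is that the theorem as stated does not assume $A\ne(0)$ or $A\ne S$; the paper disposes of these edge cases first (invoking Theorem~\ref{domain-semilattice-theorem} for $A=(0)$ and noting $A=S$ is trivial), and you should do the same rather than writing ``since we are assuming $A\ne(0)$''.
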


\begin{proof}
If $A=\{0\}$ then the result follows from Theorem~\ref{domain-semilattice-theorem}, while if $A=S$ the result is trivial. Henceforth, assume that $\{0\}\ne A\ne S$.

\smallskip

That $R$ is the given semilattice follows immediately from Theorem~\ref{R-semilattice-theorem} and the observation that as every ideal of a quotient of a Dedekind domain is principal, the map $\ed$ is a surjection.

Let $e \in E(D)$ and consider $\Base(R_e)$. By Lemma~\ref{paper1-lemma2}, $\Base(R_e) \subseteq \delta^{-1}(\Base(D_e)) = \delta^{-1}(\{e\})$. Since every ideal of $R$ is principal, there exists some $x \in R$ such that $\delta(x) = (x) = e$ and hence $\Base(R_e) \subseteq \delta^{-1}(\{e\}) = \xux{x}$. By Theorem~\ref{xux-group-theorem}, $\xux{x}$ is a group and hence by Lemma~\ref{paper1-lemma}, $\xux{x} \subseteq \Base(R_e)$ and so $\Base(R_e) = \xux{x}$ and $R_e$ is a stratified extension of a group.
\end{proof}

\medskip

It is clear from Lemma~\ref{paper1-lemma2} that ${R_e}^i \subseteq \delta^{-1}({D_e}^i)$. In fact, we have equality.

\begin{proposition}
Let $e = \prod_{j \in \Lambda(e)} A_j^{e_j} \in E(D)$. Then ${R_e}^i = \delta^{-1}({D_e}^i)$.
\end{proposition}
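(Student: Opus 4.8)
The plan is to prove the two inclusions separately. The inclusion ${R_e}^i \subseteq \delta^{-1}({D_e}^i)$ is exactly Lemma~\ref{paper1-lemma2} applied to the homomorphism $\delta$, as already observed just before the statement, so only the reverse inclusion needs work. First I would take $x \in \delta^{-1}({D_e}^i)$. Since $e$ is idempotent we have $D_eD_e \subseteq D_{ee} = D_e$, so $D_e$ is a subsemigroup of $D$ and ${D_e}^i \subseteq D_e$; together with $\delta^{-1}(D_e) = (\ed)^{-1}(e) = R_e$ this already gives $x \in R_e$. Moreover $(x) = \delta(x) \in {D_e}^i$, so by definition of the power there are $I_1, \dots, I_i \in D_e$ with $(x) = I_1 \cdots I_i$.

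Next I would descend from ideals to elements. Because $R$ is a quotient of a Dedekind domain by a non-zero ideal, every ideal of $R$ is principal, so each $I_k = (y_k)$ for some $y_k \in R$, and $\ed(y_k) = \varepsilon(I_k) = e$ shows $y_k \in R_e$. Then $(x) = (y_1)\cdots(y_i) = (y_1 \cdots y_i)$, so $x\,{\cal J}\,y_1\cdots y_i$ by Corollary~\ref{xux-corollary}, and hence $x = y_1 \cdots y_i u$ for some $u \in R$.

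The one step needing a little care is to absorb the stray factor $u$ without leaving $R_e$, i.e.\ to show $y_i u \in R_e$. I would apply the homomorphism $\ed : R \to E(D)$ to the identity $(x) = (y_1)\cdots(y_{i-1})(y_i u)$, obtaining $e = \ed(x) = e^{i-1}\,\ed(y_i u) = e\,\ed(y_i u)$, which in the semilattice $E(D)$ forces $e \subseteq \ed(y_i u)$; on the other hand $(y_i u) \subseteq (y_i)$, so monotonicity of $\varepsilon$ gives $\ed(y_i u) \subseteq \ed(y_i) = e$. Hence $\ed(y_i u) = e$, that is $y_i u \in R_e$, so $x = y_1 \cdots y_{i-1}(y_i u)$ exhibits $x$ as a product of $i$ elements of $R_e$ and $x \in {R_e}^i$.

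The main (and really only) obstacle is this absorption argument; everything else is a routine transfer of an ideal-level factorisation to the element level, using principality of ideals in $R$ and the fact that ${\cal J} = \ker\delta$. The degenerate cases are harmless: $i=1$ is the definition of $R_e$, and when $(x) = e$ one may simply take every $I_k = e$, after which the general argument applies verbatim.
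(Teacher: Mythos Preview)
Your proof is correct. It takes a somewhat different and in fact more direct route than the paper's.

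The paper invokes the layer characterisation of Proposition~\ref{layer-proposition}: it writes the minimal prime factorisation $\delta(x)=\prod_{j\in\Lambda(e)}A_j^{f_j}$, inflates the exponents to $g_j=\max(f_j,i)$, chooses explicit generators $a_j$ of the primes $A_j$, sets $y=\prod a_j^{g_j}$, and then factors $x=yu$ as $\bigl(\prod a_j\bigr)^{i-1}$ times a remainder. The absorption step is done by contradiction: if the remainder had $\ed$-value $f\neq e$, then $ef=f$ would force $\ed(x)=f$. You bypass all the prime-factorisation machinery by working straight from the definition of ${D_e}^i$, lifting each ideal factor $I_k$ to a generator $y_k\in R_e$ via principality, and handling the absorption via monotonicity of $\varepsilon$ together with the semilattice identity $e\cdot\ed(y_iu)=e$. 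The two absorption arguments are the same fact in different clothing (indeed $\ed(y_iu)=\ed(y_i)\ed(u)=e\,\ed(u)$ gives $\ed(y_iu)\subseteq e$ without appealing to monotonicity), but your packaging avoids Proposition~\ref{layer-proposition} entirely and would go through in any commutative ring whose ideals are principal, not just quotients of Dedekind domains.
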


\begin{proof}
It remains to show that $\delta^{-1}({D_e}^i) \subseteq {R_e}^i$. Let $x \in R_e$ be such that $\delta(x) \in {D_e}^i$. Then $\delta(x)$ has minimal prime factorisation $\prod_{j \in \Lambda(e)} A_j^{f_j}$ where $\min\{f_j | f_j \neq e_j\} = i$. Let $g_j = \max(f_j, i)$. Then $\prod_{j \in \Lambda(e)} A_j^{g_j}$ is a prime factorisation of $\delta(x)$ with $g_j \geq i$ for every $j \in \Lambda(e)$.

Since $S$ is a Dedekind domain, every ideal of $R$ is principal so there exists some $a_j \in R$ such that $\delta(a_j) = (a_j) = A_j$ for every $j \in \Lambda(e)$. Let $y = \prod_{j \in \Lambda(e)} a_j^{g_j}$. Clearly $\delta(y) = \delta(x)$ and so $x {\cal J} y$ and hence $x = yu$ for some $u \in R$. Then
$$
x = \left(\prod_{j \in \Lambda(e)} a_j\right)^{i-1}\left(u \prod_{j \in \Lambda(e)} a_j^{g_j-(i-1)}\right).
$$
Clearly $\ed(\prod_{j \in \Lambda(e)} a_j) = e$, so if $\ed(u \prod_{j \in \Lambda(e)} a_j^{g_j-(i-1)}) = e$ then $x \in {R_e}^i$ as required. Suppose otherwise, so $\ed(u \prod_{j \in \Lambda(e)} a_j^{g_j-(i-1)}) = f$ for some $f \in E(D)$ with $f \neq e$. As each $g_j-(i-1) > 0$, $\ed(\prod_{j \in \Lambda(e)} a_j^{g_j-(i-1)}) = e$ and so $e$ divides $f$ and hence $ef = f$. But then $\ed(x) = e^{i-1}f = f$, a contradiction. Hence $\ed(u \prod_{j \in \Lambda(e)} a_j^{g_j-(i-1)}) = e$ and so $x \in {R_e}^i$ and $\delta^{-1}({D_e}^i) \subseteq {R_e}^i$.
\end{proof}

\begin{corollary}
Let $x \in R_e$. Then $x$ lies in the $i$-th layer of $R_e$ if and only if $\delta(x)$ lies in the $i$-th layer of $D_e$.
\end{corollary}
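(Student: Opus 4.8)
The plan is to read the statement off directly from the preceding Proposition, which establishes ${R_e}^i = \delta^{-1}({D_e}^i)$ for every $i\ge 1$. Recall that by definition the $i$-th layer of $R_e$ is ${R_e}^i\setminus{R_e}^{i+1}$ and the $i$-th layer of $D_e$ is ${D_e}^i\setminus{D_e}^{i+1}$, so the whole argument is an element chase through these two descriptions. First I would fix $x\in R_e$ and apply the Proposition with the exponents $i$ and $i+1$ in turn, obtaining the two equivalences $x\in{R_e}^i\iff\delta(x)\in{D_e}^i$ and $x\in{R_e}^{i+1}\iff\delta(x)\in{D_e}^{i+1}$. Negating the second and conjoining with the first yields that $x\in{R_e}^i\setminus{R_e}^{i+1}$ precisely when $\delta(x)\in{D_e}^i\setminus{D_e}^{i+1}$, which is exactly the claim.

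Note that no separate appeal to surjectivity of $\delta$ is required at this point: we begin with an element $x$ already known to lie in $R_e$, and the content of the statement — in particular the reverse inclusion $\delta^{-1}({D_e}^i)\subseteq{R_e}^i$, whose verification needed the principality of ideals in a quotient of a Dedekind domain and the explicit lifting of a minimal prime factorisation of $\delta(x)$ to a product of principal generators — has already been carried out in the preceding Proposition.

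Consequently there is essentially no obstacle here; the corollary is a one-line consequence of the Proposition together with the definition of a layer, and the only care needed is to keep track of both exponents $i$ and $i+1$ when passing between $R_e$ and $D_e$.
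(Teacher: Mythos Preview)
Your argument is correct and is precisely the intended one: the paper states this corollary without proof, as it follows immediately from the preceding Proposition ${R_e}^i=\delta^{-1}({D_e}^i)$ applied at $i$ and $i+1$ together with the definition of a layer.
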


\begin{corollary}
Let $A \unlhd S$ be a non-zero proper ideal with prime factorisation $P_1^{e_1} \ldots P_n^{e_n}$ and $A_i = P_i/A$. Then 
\begin{enumerate}
\item $R_e$ is a group if and only if $e=\prod_{i\in\Lambda(e)}A_i$.
\item $R$ is a semilattice of groups if and only if $e_1 = \ldots = e_n = 1$.
\item $R$ is a semilattice of groups if and only if $R_{(0)}$ is a group.
\item $E(D)$ is a chain if and only if $n = 1$, in which case it is the two element semilattice.
\end{enumerate}
\end{corollary}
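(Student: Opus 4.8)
I would establish the four parts in turn, using throughout the order isomorphism $\Lambda:E(D)\to\pn$, the description of $D_e$ as the set of ideals whose minimal prime factorisation is $\prod_{j\in\Lambda(e)}A_j^{f_j}$ with $0<f_j\le e_j$, the characterisation of $E(D)$ as those ideals whose minimal prime exponents all lie in $\{0,e_i\}$, and the preceding proposition and corollary relating the layers of $R_e$ to those of $D_e$. Recall also that, as $A$ is a non-zero proper ideal, $R=S/A$ is a principal ideal ring, so $\delta$ maps $R$ onto $D$, and that $n\ge1$ since $A$ has a non-trivial prime factorisation. For Part~1: since $R_e=(\ed)^{-1}(e)=\delta^{-1}(D_e)$ and, as in the proof of Theorem~\ref{main-theorem}, $\Base(R_e)=\delta^{-1}(\{e\})$, the semigroup $R_e$ is a group precisely when $R_e=\Base(R_e)$, i.e. $\delta^{-1}(D_e)=\delta^{-1}(\{e\})$, which by surjectivity of $\delta$ is equivalent to $D_e=\{e\}$. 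From the description of $D_e$, the only admissible exponent vector $(f_j)_{j\in\Lambda(e)}$ with $0<f_j\le e_j$ is $(e_j)$ exactly when $e_j=1$ for every $j\in\Lambda(e)$, that is, when $e=\prod_{j\in\Lambda(e)}A_j$. (Equivalently one may run this count through the corollary matching the layers of $R_e$ with those of $D_e$: $R_e$ is a group iff $D_e$ has no non-trivial layer.)

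For Part~2, I would use the standard fact (see~\cite{howie-95}) that a commutative semigroup is a semilattice of groups if and only if it is regular, hence if and only if $x\in x^2R$ for every $x$, hence if and only if $(x)=(x)^2$ for every $x\in R$. Since every ideal of $R$ is principal and $\delta$ is onto $D$, this says precisely that $D=E(D)$; by the idempotent characterisation, $D=E(D)$ holds iff every exponent vector $(g_i)$ with $0\le g_i\le e_i$ has all entries in $\{0,e_i\}$, which happens iff $e_1=\cdots=e_n=1$. (Alternatively, $R$ is a semilattice of groups iff every $R_e$ is a group, and since $\Lambda$ realises every subset of $N$ as some $\Lambda(e)$, Part~1 gives the same answer.)

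For Part~3, the zero ideal $(0)$ of $R$ has minimal prime factorisation $A_1^{e_1}\cdots A_n^{e_n}$ with every exponent maximal, so $(0)$ is the least element of $E(D)$ and $\Lambda((0))=N$. By Part~1, $R_{(0)}$ is a group iff $A_1^{e_1}\cdots A_n^{e_n}=A_1\cdots A_n$; applying the isomorphism $\Phi$ this reduces to $P_1^{e_1}\cdots P_n^{e_n}=P_1\cdots P_n$ in $S$, which by uniqueness of prime factorisation in the Dedekind domain $S$ holds iff $e_i=1$ for all $i$, equivalently, by Part~2, iff $R$ is a semilattice of groups. For Part~4, transporting along $\Lambda$, $E(D)$ is a chain iff $\pn$ is totally ordered by inclusion, which happens iff $|N|=n\le1$; combined with $n\ge1$ this forces $n=1$, and then $E(D)\cong\{\emptyset,\{1\}\}$ is the two-element semilattice.

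The one point requiring care is Part~2: ``$R$ is a semilattice of groups'' must be read as an intrinsic property of the multiplicative semigroup, and one must invoke its equivalence with regularity so that the decomposition ${\cal S}[E(D);R_e]$ is \emph{forced} to consist of groups, not merely being one available decomposition. Once that is in place, all four statements follow from Part~1, the idempotent lemma, and the dictionary $E(D)\cong\pn$ by routine bookkeeping with prime exponents.
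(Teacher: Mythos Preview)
Your proof is correct and matches the paper's implicit argument: the corollary is stated without proof, being meant as an immediate consequence of the layer description in Proposition~\ref{layer-proposition} and its corollaries, Theorem~\ref{main-theorem}, and the bijection $\Lambda:E(D)\to\pn$, which is precisely the toolkit you deploy. Your extra care in Part~2---reading ``semilattice of groups'' as an intrinsic property of the multiplicative semigroup and passing through regularity, rather than simply asking that each $R_e$ in the fixed decomposition ${\cal S}[E(D);R_e]$ be a group---goes a step beyond what the paper presumably intends, but it is a legitimate strengthening; your parenthetical alternative (every $\Lambda(e)$ is realised, so apply Part~1 to each) is the more direct route the paper has in mind.
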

An interesting consequence of this construction is that $S/A$ is a field if and only if $A$ is prime.

\begin{proposition}
Let $R$ be a quotient of a Dedekind domain. Then $R$ is a strong semilattice of semigroups if and only if it is a semilattice of groups.
\end{proposition}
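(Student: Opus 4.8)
The plan is to read the statement as concerning the canonical decomposition $R={\cal S}[E(D);R_e]$ of Theorem~\ref{main-theorem}: the claim is that this can be equipped with structure homomorphisms making it a strong semilattice. (This is the maximal semilattice decomposition of $R$, since any semilattice homomorphism out of $R_e$ is constant on the group $\Base(R_e)$ and hence, as every layer element has a power lying in $\Base(R_e)$, constant on all of $R_e$.) I would then prove the decomposition is strong if and only if every $R_e$ is a group, which is by definition the assertion that $R$ is a semilattice of groups.

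For the forward implication, suppose $R={\cal S}[E(D);R_e;\phi_{e,f}]$ is a strong semilattice. Since $\delta(1)=(1)=R$ we have $1\in R_R$, and $Re=e$ for every $e\in E(D)$. Fix $e\in E(D)$. Computing the strong-semilattice product of $1\in R_R$ with an arbitrary $t\in R_e$, and using $Re=e$ together with $\phi_{e,e}=\mathrm{id}_{R_e}$, gives $t=1\cdot t=\phi_{R,e}(1)\,t$; thus $u:=\phi_{R,e}(1)$ is a left, hence (by commutativity) two-sided, identity for $R_e$. In particular $u$ is idempotent, so $u\in\Reg(R_e)\subseteq\Base(R_e)$ by Lemma~\ref{paper1-lemma}. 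Since $\Base(R_e)$ is an ideal of $R_e$, every $a\in R_e$ satisfies $a=ua\in\Base(R_e)R_e\subseteq\Base(R_e)$, whence $R_e=\Base(R_e)$, a group by Theorem~\ref{main-theorem}. As $e$ was arbitrary, $R$ is a semilattice of groups.

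For the converse, suppose each $R_e$ is a group, with identity $\epsilon_e$; then $\epsilon_e$ is an idempotent of $R$ with $\delta(\epsilon_e)=e$, and $\epsilon_f\epsilon_g=\epsilon_{fg}$. Whenever $ef=f$ in $E(D)$ define $\phi_{e,f}\colon R_e\to R_f$ by $\phi_{e,f}(a)=a\epsilon_f$. I would then verify in turn: $\phi_{e,f}$ maps into $R_f$ (as $\delta(a\epsilon_f)=\delta(a)f=ef=f$ and $f$ is idempotent); $\phi_{e,f}$ is a homomorphism (by commutativity and $\epsilon_f^2=\epsilon_f$); $\phi_{e,e}=\mathrm{id}_{R_e}$; $\phi_{f,g}\circ\phi_{e,f}=\phi_{e,g}$ whenever $g\le f\le e$ (using $\epsilon_f\epsilon_g=\epsilon_g$); and finally that for $a\in R_e$, $b\in R_f$ one has $ab=ab\epsilon_{ef}=\phi_{e,ef}(a)\,\phi_{f,ef}(b)$, since $ab\in R_{ef}$ and $\epsilon_{ef}$ is the identity of that group. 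This exhibits $R$ as a strong semilattice of the groups $R_e$. (Alternatively, a semilattice of groups is a Clifford semigroup, hence a strong semilattice of groups by the classical structure theorem.)

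I expect the forward implication to be the main point, and within it the decisive observation is that the structure homomorphism out of the top component must send $1$ to a two-sided identity of $R_e$: once $R_e$ has an identity, that element is idempotent and therefore sits in the ideal $\Base(R_e)$, and multiplying any element of $R_e$ by it keeps us inside $\Base(R_e)$, forcing the stratified extension to collapse onto its group base. The converse is then a routine verification of the strong-semilattice axioms (or the indicated appeal to Clifford structure theory).
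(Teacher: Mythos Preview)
Your proof is correct and follows essentially the same route as the paper's: both hinge on the observation that $\phi_{R,e}(1)$ acts as an identity for $R_e$, and for the converse both ultimately appeal to the classical fact that a semilattice of groups is strong. The only cosmetic difference is in how the forward implication is finished: the paper notes directly that $t=\phi_{R,e}(1)t$ gives $R_e^2=R_e$ (hence $\Base(R_e)=R_e$), whereas you route through ``the identity is idempotent, hence lies in the ideal $\Base(R_e)$''; these are equivalent one-line deductions. One small caveat: your parenthetical remark that ``every layer element has a power lying in $\Base(R_e)$'' fails when $A=(0)$ (e.g.\ a prime $p\in\mathbb Z$ never has $p^n=0$), so that justification of maximality is not quite right in the domain case---but this aside is not used in the actual argument.
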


\begin{proof}
It is well known (see, for example, \cite[Theorem 4.2.1]{howie-95}) that a semilattice of groups is a strong semilattice. For the converse, suppose $R$ is a strong semilattice of semigroups. For any $e \in E(D)$ we have $R \geq e$ and so there exists a morphism $\phi_{R,e}: R_R \to R_e$ such that $xy = \phi_{R,e}(x)y$ for any $x \in R_R$ and $y \in R_e$. Since $1 \in R_R$ we have $x = 1x = \phi_{R,e}(1)x$ for every $x \in R_e$. Then $x \in {R_e}^2$ so ${R_e}^2 = R_e$ and hence $R_e$ is a group. As this holds for every $e \in E(D)$, $R$ is a semilattice of groups.
\end{proof}

\medskip

We can summarise the construction of the semilattice of semigroups with this short `recipe'. First we note that we can reduce the amount of calculation required by making use of the following result.

\begin{proposition}\label{xvx-proposition}
Let $S$ be a Dedekind domain, $A$ an ideal of $S$, and $R = S/A$. For all $x \in R$, $xV_x = xU$ where $U$ is the group of units of $R$.
\end{proposition}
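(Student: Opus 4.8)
The plan is to reduce the statement to a lifting property for units. The inclusion $xU\subseteq xV_x$ is immediate: if $w\in U$ then $xww^{-1}=x$, so $w\in V_x$ and $xw\in xV_x$. For the reverse inclusion note that for $u,u'\in R$ we have $xu=xu'$ if and only if $u-u'\in\Ann(x)=\overline x$, i.e. if and only if $[u]_{\overline x}=[u']_{\overline x}$, while by Lemma~\ref{cancellation-lemma}(3), $u\in V_x$ if and only if $[u]_{\overline x}\in U_{\overline x}$. Hence $xV_x\subseteq xU$ is equivalent to the assertion that the canonical surjection $R\to R/\overline x$ carries the unit group $U$ onto $U_{\overline x}$ --- that is, every unit of $R/\overline x$ lifts to a unit of $R$ --- and the whole proof comes down to establishing this for $R=S/A$.

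First I would dispose of the degenerate cases. If $A=S$ then $R=\{0\}$ and everything is trivial; if $A=(0)$ then $R\cong S$ is a domain, so for $x\neq0$ one has $\overline x=(0)$ and cancellation in the domain forces $V_x=U$, while for $x=0$ both $xV_x$ and $xU$ equal $\{0\}$. So assume $(0)\neq A\neq S$ and take the prime factorisation $A=P_1^{e_1}\cdots P_n^{e_n}$ with $e_i>0$. The $P_i$ are distinct maximal ideals of $S$ (nonzero primes of a Dedekind domain are maximal), so the $P_i^{e_i}$ are pairwise comaximal and the Chinese Remainder Theorem yields a ring isomorphism $R=S/A\cong\prod_{i=1}^n R_i$ with $R_i=S/P_i^{e_i}$. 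Each $R_i$ is local with nilpotent maximal ideal (the image of $P_i$, whose $e_i$-th power vanishes), so every element of $R_i$ is either a unit or lies in that maximal ideal.

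The heart of the argument is a componentwise repair of $u$. Using $u\in V_x$, pick $v\in R$ with $xuv=x$, and transport $x,u,v$ to tuples $(x_i)_i,(u_i)_i,(v_i)_i$ under the isomorphism above, so that $x_iu_iv_i=x_i$ in $R_i$ for each $i$. Fix $i$: if $u_i$ is a unit of $R_i$, set $w_i=u_i$; otherwise $u_i$ lies in the maximal ideal of $R_i$, hence so does $u_iv_i$, hence $1-u_iv_i$ does not and is therefore a unit, and from $x_i(1-u_iv_i)=0$ we deduce $x_i=0$, so $x_iu_i=0=x_i$ and we may set $w_i=1$. In both cases $w_i$ is a unit of $R_i$ with $x_iw_i=x_iu_i$, so the tuple $(w_i)_i$ corresponds to a unit $w$ of $R$ with $xw=xu$; thus $xu\in xU$, and since $u\in V_x$ was arbitrary, $xV_x\subseteq xU$, giving equality.

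I expect the main obstacle to be conceptual rather than computational: one cannot simply take $w=u$, because $V_x$ really does contain non-units --- witness the non-unit idempotent example preceding Theorem~\ref{xux-group-theorem} --- which is exactly why some hypothesis on $R$ is needed. Splitting $R$ into a finite product of local Artinian rings is what makes the repair possible, since on each local factor a relation $x_iu_iv_i=x_i$ with $u_i$ non-invertible forces $x_i=0$. The supporting facts --- that distinct nonzero primes of a Dedekind domain are comaximal, and that unit groups decompose along a CRT product --- are routine and I would handle them briefly.
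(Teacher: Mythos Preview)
Your proof is correct and takes a genuinely different route from the paper's. After disposing of the same degenerate cases $A=S$ and $A=(0)$ as you do, the paper simply observes that $R=S/A$ is a principal ideal ring and then invokes a result of Kaplansky~\cite{kaplansky-49} to the effect that in such a ring $(a)=(b)$ forces $a=bu$ for some unit $u$; since $y\in xV_x$ means $(x)=(y)$ (this set being the $\mathcal J$-class of $x$), one gets $y\in xU$ at once. You instead give a self-contained argument: decompose $R$ via the Chinese Remainder Theorem as $\prod_i S/P_i^{e_i}$, a finite product of local Artinian rings, and repair $u$ componentwise using the unit/maximal-ideal dichotomy in each factor to force $x_i=0$ whenever $u_i$ fails to be a unit. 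The paper's route is shorter but imports a nontrivial external fact; yours is longer but entirely elementary, and it makes visible the mechanism behind the result---namely that in each local piece the relation $x_iu_iv_i=x_i$ with $u_i$ a non-unit collapses to $x_i=0$. Your reformulation of $xV_x\subseteq xU$ as surjectivity of $U\to U_{\overline x}$ is also a nice byproduct not explicit in the paper.
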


\begin{proof}
Note that when $A = S$ the result is trivial and when $A = (0)$ we have $V_x = U$ by cancellativity as $R$ is a domain. We assume henceforth that $A$ is a non-zero proper ideal.

It is readily apparent that $U \subseteq V_x$ for all $x \in R$ and hence $xU \subseteq xV_x$. For the reverse inclusion, we note that it is well known that $R$ is then a principal ideal ring and so, by \cite{kaplansky-49}, if $(a)=(b)$ then $a=bu$ for some $u \in U$. It is easy to see that if $y \in xV_x$ then $(x) = (y)$ and so $y \in xU$ as required.
\end{proof}

Let $S$ be a Dedekind domain and $A \unlhd S$. If $A = S$ then $R=S/A$ is the trivial ring. If $A = (0)$ then $R \cong S$ and so by Theorem~\ref{domain-semilattice-theorem} we have a semilattice of two semigroups. One is $R_R$, the group of units, while the other is $R_{(0)}$, a stratified extension of the trivial group. In the latter case, the elements of layer $i$ are precisely those which can be factorised as a product of $i$ irreducible elements.

\smallskip

Otherwise, let $(0) \neq A \neq S$ be a proper non-zero ideal of $S$ and let $A=P_1^{e_1}\ldots P_n^{e_n}$ be the unique factorisation of $A$ into a product of prime ideals of $S$. Then the semilattice is order isomorphic to ${\cal P}(N)$ and each subset $K\subseteq N$ is associated with an idempotent $e\in E(D)$. The subsemigroup $R_e$ is then a stratified extension of a group where the group is $\Base(R_e) = \delta^{-1}(e)$.

\smallskip

To calculate $R_e$ and $\Base(R_e)$ in a practical setting, we proceed as follows. First, the two easy cases are when $K=\emptyset$, in which case $e=(1+A)$ and $\Base_{R_e}$ is the group of units of $R$, while if $K=N$ then $e=(0+A)$ and the group consists of only the zero of $R$. Suppose now that $\emptyset\subset K\subset N$ and let $A_i = \Phi^{-1}(P_i) = P_i/A$. Then $e = \prod_{i\in K}A_i^{e_i}$ and  since $R$ is a principal ideal ring, if $e=(x+A)$ then $\Base(R_e) = \{xv+A\}$ where $v+A\in V_{x+A}$, and so by Proposition~\ref{xvx-proposition}, $\Base(R_e) = (x+A)R_R$. To determine the stratified structure of $R_e$, note that if $e_i=1$ for all $i\in K$ then $R_e$ is a group and so there are no layers. If at least one of the $e_i>1$ and if for a given subset $K$ and a collection $f_i, i\in K$
$$
\prod_{i\in K}A_i^{f_i} = (y+A)
$$ 
and where $0< f_i\le e_i$ is such that $j = \min\{f_i\;|\;f_i\ne e_i,i\in K\}$, then $\{yv+A\;|\;v+A\in V_{y+A}\}=(y+A)R_R$ is a subset of the $j$-th layer, and moreover the $j$-th layer consists of the union of all such subsets. Note that if $A_i=(a_i+A)$ then $y+A=\prod_{i\in K}(a_i^{f_i}+A)$.

\medskip

\section{Examples}

In this short section we illustrate the above theory by considering a number of examples of Dedekind domains and examining the semilattice and stratified structure of the multiplicative semigroup of both the domain and of a typical quotient of the domain.

\medskip

At the more trivial end of the spectrum, suppose that $S = F$, a field. Then every non-zero element is a unit, so we have $S_{(0)} = \{0\}$ and $S_S = F^\times$. In other words, the multiplicative semigroup of a field is simply a group with zero as expected.

\subsection{The integers}

As a more interesting example, let $S = \z$, the ring of integers. For any $n\in\z$, the sets $\xux{n}$ are (isomorphic to) $\{n,-n\}$ and the units in $\z$ are of course $\pm 1$ and so $S_S$ is the two element group. We know from Theorem~\ref{domain-semilattice-theorem} that $S_{(0)}$ is a stratified extension of the trivial group and the layered structure of $S_{(0)}$ is then easy to establish. The first layer of $S_{(0)}$ consists of every prime integer $p$. The second layer contains all products $pq$ of exactly 2 (not necessarily distinct) primes $p$ and $q$, and in general, layer $n$ consists of all products of exactly $n$ (not necessarily distinct) primes.

\smallskip

Given that $\z$ is a principal ideal domain, then all ideals of $\z$ are of the form $(n)$ for some $n\in\z$. If $R = S/(n)$ then of course $R = \z_n$ the ring of integers modulo $n$. To reduce pedantry we will assume that $\z_n = \{1,\ldots,n\}$. We know from Theorem~\ref{main-theorem} that $R$ is a semilattice of stratified extensions of groups, ${\cal S}[E(D);R_e]$ and that $E(D) \cong {\cal P}(K)$ where $K=\{1,\ldots, k\}$ and where $n=p_1^{e_1}\ldots p_k^{e_k}$ is the prime factorisation of $n$ in $\z$.

First, note that if $(e)\in E(D)$ then we can assume, without loss of generality, that $e = \prod_{i\in I}{p_i^{e_i}}\in\z_n$ where $I = \Lambda((e))\in{\cal P}(K)$.  The base of $R_e$ is $\Base(R_e)=eU_{\overline e}$ and using Theorem~\ref{xux-group-theorem} and Lemma~\ref{cancellation-lemma}, we deduce that $\z_n/({\overline e})\cong \z_{n/e}$ and that
$$
eU_{\overline e}\cong U_{n/e}
$$
where $U_{n/e}$ is the group of units in $\z_{n/e}$. If $(e)$ is square-free then $R_e=U_{n/e}$ otherwise $R_e$ is a stratified extension of $U_{n/e}$ with height $m=\max\{e_j|j\in\Lambda(e)\}-1$. In this case, the structure of the individual layers of $R_e$ is more complicated to describe is general, but essentially if $x$ is in the $i$-th layer of $R_e$, $1\le i\le m$, then
$$
x = \prod_{j\in\Lambda(e)}p_j^{g_j}u
$$
where $u\in U_n$ and $0<g_j\le e_j$ and $\min\{g_j|g_j\ne e_j\} = i$. Note that $\{g_j|g_j\ne e_j\}\ne\emptyset$ as otherwise $x\in\Base(R_e)$.

\smallskip

As an example, if $n=12 = 2^2\times3$, then
$$
E(D) = \{(12),(4),(3),(1)\}
$$
and we have four subsemigroups

$R_{(12)} = \{6,12\}$ where $\Base(R_{(12)}) = \{12\}$. 

$R_{(4)} = \{2,4,8,10\}$ where $\Base(R_{(4)}) = \{4,8\}$ and $\{2,10\}$ forms layer 1.

$R_{(3)} = \{3,9\}$ which is a group.

$R_{(1)} = \{1,5,7,11\}$ which is the group of units mod 12.

The semilattice structure can be pictured as
\begin{center}
\begin{tikzpicture}[scale=0.75]
\node (max) at (0,4) {$R_{(1)}$};
  \node (a) at (-2,2) {$R_{(3)}$};
  \node (c) at (2,2) {$R_{(4)}$};
  \node (min) at (0,0) {$R_{(12)}$};
  \draw (min) -- (c) -- (max) -- (a) -- (min);
\end{tikzpicture}
\hskip2em
\begin{tikzpicture}[scale=0.75]
\node (max) at (0,4) {$\{1,5,7,11\}$};
  \node (a) at (-2,2) {$\{3,9\}$};
  \node (c) at (2.5,3) {$\{2,10\}$};
  \node (d) at (2.5,1) {$\{4,8\}$};
  \node (e) at (2,2) {};
  \node (min) at (0,0) {};
  \node (f) at (0,1) {$\{6\}$};
  \node (g) at (0,-1) {$\{12\}$};
  \draw (min) -- (e) -- (max) -- (a) -- (min);
  \draw (d) -- (c);
  \draw (f) -- (g);
\end{tikzpicture}
\end{center}
Notice that the semilattice will always be a finite Boolean algebra and the stratification structure is wholly dependent on the prime power factorisation of $n$. 

\subsection{The $p-$adic integers}

Let $S$ be the $p$-adic integers. There are a number of ways to view $p-$adic numbers but we consider $S$ to consist of formal sums
$$
S = \left\{\sum_{i\ge 0}a_ip^i\;|\;0\le a_i\le p-1\right\}
$$
with arithmetic performed in the usual formal manner. For more detail we refer the reader to~\cite{gouvea-20}. The expression $\sum_{i\ge 0}a_ip^i$ is also known as the {\it $p-$adic expansion} of the relevant number. 

It is easy to demonstrate that the units in $S$ are the elements where $a_0\ne0$ in the $p-$adic expansion and that non-unit elements have the form $p^k u$ where $u$ is a unit of $S$ and $k \in \n$.  It is well-known that $S$ forms a principal ideal domain and so from Theorem~\ref{domain-semilattice-theorem} we deduce that $S$ is a (2-element) semilattice of stratified extensions of groups. $S_{(1)}$ is the group of units and $\Base(S_{(0)}) = \{0\}$. It follows from the definition of $S$ that the proper non-zero ideals are those of the form $(p^k)$ for $k \in \n$, and so clearly $D_{(0)}$ is isomorphic to the infinite monogenic semigroup with zero. Since $S$ is a principal ideal domain, it follows from Proposition~\ref{pid-proposition} that ${S_{(0)}}^i = \delta^{-1}({D_{(0)}}^i)$ for all $i \in \n$ and so the $i$-th layer of $S_{(0)}$ consists of exactly the elements of the form $p^i u$ where $u$ is a unit.

\smallskip

Every non-zero proper ideal $A \unlhd S$ has the form $(p^k) = (p)^k$ for some $k \in \n$. This means that $S/A$ is isomorphic to the ring of integers modulo $p^k$. Clearly $(p)$ is a prime ideal and so $R = S/A$ is a 2-element semilattice of stratified extensions of groups, consisting of the group of units $R_{(1+A)}$ and the semigroup $R_{(0+A)}$. The latter is a stratified semigroup with zero and $k-1$ non-zero layers. For each $1\le i\le k-1$ the $i$-th layer consists of elements of the form $p^iu+A$ where $u$ is a unit of $S$.

\subsection{Rings of algebraic integers}
We now consider rings consisting of algebraic integers and as a specific example we shall consider the ring $S = \z[\sqrt{-5}]$. It is well know that rings of this nature are Dedekind domains but are not always principal ideal domains. In fact, $2+\sqrt{-5}$ is an example of an element which can easily be shown to be irreducible but not prime. 
If $A$ is an ideal of $\z[\sqrt{-5}]$ define a `norm' on $S/A$ by $N(z+A) = (z+A)({\overline z}+A) = z{\overline z}+A$, where $\overline z$ is the conjugate of $z$. It is easy to check that $N$ is multiplicative and that $z+A$ is a unit in $S/A$ if $N(z+A) = \pm1+A$.

From section~\ref{section-4}, $S$ is a 2-element semilattice of the group of units, $S_{(1)}=\{1,-1\}$, and a stratified semigroup with 0, $S_{(0)}$. Since
$$
2+\sqrt{-5} = 9\times(-2) + (-1+4\sqrt{-5})\times(-\sqrt{-5}),
$$
$$
(-1+4\sqrt{-5}) = (2+\sqrt{-5})^2\text{ and }9=(2-\sqrt{-5})(2+\sqrt{-5}),
$$
it follows that $(3,2+\sqrt{-5})^2 =(9,-1+4\sqrt{-5})= (2+\sqrt{-5})$ and so although $2+\sqrt{-5}$ is in the first layer of $S_{(0)}$ (being irreducible), $(2+\sqrt{-5})$ is not in the first layer of $D_{(0)}$. The layer structure of $S_{(0)}$ is not so easy to determine, as clearly $a+b\sqrt{-5}$ is in the $i$-th layer of $S_{(0)}$ if and only if it can be written as a product of $i$ irreducible elements.

However determining the structure of a quotient of $S$ is slightly easier, as we need only factorise a single ideal of $S$ into a product of prime ideals. As an illustrative example, let us consider
$$
A = (10, 5+5\sqrt{-5}) = (2, 1+\sqrt{-5})(5,\sqrt{-5})^2
$$
and let $R=S/A$. It is easy to show that $(2, 1+\sqrt{-5})$ and $(5,\sqrt{-5})$ are both prime ideals of $\z[\sqrt{-5}]$. In fact
$$
(2, 1+\sqrt{-5}) = \{a+b\sqrt{-5}\;|\;a\equiv b\text{ mod }2\}\text{ and }(5,\sqrt{-5}) = \{5a+b\sqrt{-5}\;|\; a,b \in \z\},
$$
while
$$
A = (10,5+5\sqrt{-5}) = \{5a+5b\sqrt{-5}\;|\;a\equiv b\text{ mod }2\}.
$$
It is easy to check that the ring $R$ has cardinality 50. In what follows, we shall frequently simplify the notation by working modulo $A$ and write the element $a+b\sqrt{-5}+A$ of $R$ as simply $a+b\sqrt{-5}$. We shall also assume a particular set of residues by taking $0\le a\le 9$ and $0\le b\le 4$.

Note from the comments preceding Proposition~\ref{layer-proposition} that $|E(D)|=|{\cal P}(\{1,2\})|$ and it can then be easily verified that 
$(5,\sqrt{-5})^2/A = (5)$ and that $(2,1+\sqrt{-5})/A = (6)$ and so
$$
E(D) = \{(0), (1), (5), (6)\}.
$$

It follows that $R_{(1)}$ and $R_{(6)}$ are groups and $R_{(0)}$ and $R_{(5)}$ are stratified extensions of groups, each with a height of $1$. For each $e \in E(D)$, the group $\Base(R_e)$ is equal to $\xux{x}$ where $(x) = e$ and hence isomorphic to $U_{\overline{x}}$.

In practical terms, $R_{(1)}=R_R$ is the group of units of $R$, and using norms we can deduce that $|R_R|=20$ and in fact
$$
R_R = \{a+b\sqrt{-5}\;|\;a\not\equiv b\text{ mod }2, a\not\equiv0\text{ mod }5\}.
$$

For $R_{(5)}$, it follows that $\Base(R_{(5)}) = \{5v\;|\;v\in R_R\} = 5R_R = \{5\}$. To find the elements in layer 1 of $R_{(5)}$ we note that $(5,\sqrt{-5})/A = (\sqrt{-5})$ and so layer 1 is
$$
\delta^{-1}((\sqrt{-5})) = \{(\sqrt{-5})v\;|\;v\in  R_R\} = \{\sqrt{-5},3\sqrt{-5},7\sqrt{-5},9\sqrt{-5}\}.
$$

For $R_{(6)}$, it follows that 
$$
\Base(R_{(6)}) = 6R_R = \{a+b\sqrt{-5}\;|\;a\equiv b\text{ mod }2, a\not\equiv0\text{ mod }5\}.
$$
Note that $|\Base(R_{(6)})|=20$ also.

Finally, the layer 1 in $R_{(0)}$ can be calculated in the same way as for $R_{(5)}$ using the fact that $(2, 1+\sqrt{-5})/A\;(5,\sqrt{-5})/A = (5+\sqrt{-5})$. It then follows easily that the first layer of $R_{(0)}$ is
$$
(5+\sqrt{-5})R_R = \{2\sqrt{-5},4\sqrt{-5},5+\sqrt{-5},5+3\sqrt{-5}\}.
$$

\subsection{Integers revisited}

For a final example we return to a less complicated ring in order to demonstrate a more complicated layer structure. Let $S = \z$, $A = (6000) = (2)^4(3)(5)^3$ and $R = S/A$. Working modulo $A$, let $e \in E(D)$ be the ideal $(2000) = (2)^4(5)^3$. Then $R_e$ is a stratified extension of a group with height 3. Applying our previous results, we see that $\Base(R_e) = \delta^{-1}((2000)) = \{2000u\;|\; u \in R_R\} = 2000R_R$.

\medskip

For the layers, note that layer 1 of $D_e$ consists of $(2)(5)$, $(2)^2(5)$, $(2)^3(5)$, $(2)^4(5)$, $(2)(5)^2$ and $(2)(5)^3$. Layer 1 of $R_e$ is hence the union of $10R_R$, $20R_R$, $40R_R$, $80R_R$, $50R_R$ and $250R_R$.

\smallskip

Proceeding in a similar fashion, layer 2 of $R_e$ is the union of $100R_R$, $200R_R$, $400R_R$ and $500R_R$, while layer 3 is simply the set $1000R_R$.

\end{document}